\documentclass[runningheads]{llncs}
\usepackage[T1]{fontenc}
\usepackage{amsmath}
\usepackage{amssymb}

\usepackage{epsfig}
\usepackage{subfig}
\usepackage{graphicx}
\begin{document}
	\title{Curvature Preserving Fractal Interpolation Functions: A Hybrid Geometric Approach
	}
	\titlerunning{Cirvature Preserving FIFs}
	%
	\author{K. R. Tyada\inst{2}\orcidID{0000-0002-3780-0357}}
	\authorrunning{K. R. Tyada}
	%
	\institute{Department of Mathematics and Computer Science,\\ Sri Sathya Sai Institute of Higher Learning,\\ Prasanthi Nilayam, Puttaparthi, Andhra Pradesh, India - 515134\\
		\email{kurmaraotyada@sssihl.edu.in}}
	\maketitle
	
	\begin{abstract}
		Fractal interpolation functions (FIFs) generated using iterated function systems (IFS) provide a powerful framework for modeling self-similar and irregular data, yet traditional constructions often neglect geometric fidelity such as curvature. In this paper, we introduce a curvature-preserving variant of FIFs built upon a classical cubic spline interpolant. We define a curvature-aware iterated function system (IFS) with parameters optimized via a penalty-based approach to minimize deviation from the curvature of the classical spline. Theoretical conditions for interpolation and curvature approximation are derived. We compare the curvature of the proposed FIF with that of the classical cubic spline and discrete data curvature across multiple examples. Our method achieves both data interpolation and shape fidelity, preserving curvature more accurately than standard splines. The approach has potential applications in geometric modeling, computer graphics, and scientific data interpolation.
		
		\vspace{0.5 cm}
		
		\noindent {\bf Keywords}: fractal interpolation, curvature preservation, spline, iterated function systems, shape-preserving interpolation, numerical optimization
	\end{abstract}
	
	\section{Introduction}
	Interpolation schemes play an indispensable role in various fields such as numerical analysis, biotechnology, computer design, graphics, and scientific computation, particularly in the continuous visualization of discrete data. A desirable property of any interpolant is its ability to preserve the key geometric features inherent in the data. Classical shape-preserving interpolation techniques, which maintain positivity, monotonicity, or convexity, are well established in the literature (see for instance \cite{FT91}-\cite{HG1991}). These interpolants often provide either piecewise or globally smooth derivatives. However, these approaches rely on assumptions of derivative regularity, which limits their applicability to data exhibiting irregular, oscillatory, or fractal behavior where higher-order smoothness is absent. The classical methods like cubic splines ensure global smoothness and continuity, they often fail to capture geometric invariants such as curvature when modeling data with significant shape variations. Moreover, the classical polynomial spline interpolants are unique for a given dataset and often lack flexibility in preserving finer geometric features. In such contexts, fractal interpolation becomes a more effective modeling tool, introduced by Barnsley (\cite{Barnsley86}, \cite{Barnsley93}), offer a recursive framework capable of capturing both self-affine and irregular structures. Despite their flexibility, standard FIFs emphasize self-similarity over geometric fidelity.
	
	\par Fractal interpolation functions (FIFs), first introduced by Barnsley are constructed as attractors of an Iterated Functions System (IFS) and are characterized by their invariance under the Read-Bajraktarevi\'{c} operator on suitable function spaces. They offer a flexible recursive mechanism for interpolation with intrinsic self-similarity. Although early developments focused on continuity and approximation accuracy, recent research has extended FIFs to shape-preserving variants. Barnsley and Harrington \cite{BarHar89} extended the theory by constructing $C^k$-continuous spline FIFs under specific boundary conditions. The theoretical framework for spline-based FIFs has been extended to more general settings through subsequent work while $\alpha$-fractal polynomial splines with Hermite-type boundary conditions were studied by Chand and Navascu\'{e}s \cite{CN2009}, \cite{CVN2014}. Over the years, numerous variants such as rational FIFs, Hermite FIFs, and $\alpha$-fractal functions have been introduced to provide enhanced shape control, smoothness, and local adaptability (see for instance \cite{CV2015}-\cite{KC2018}). 
	
	\par Nevertheless, a significant gap remains, as conventional FIFs are designed either for data fitting or for reproducing visual self-affinity and do not explicitly consider curvature fidelity. To utilize FIFs in geometry-sensitive applications, it becomes essential to develop methods that explicitly align the interpolant’s curvature with that of the underlying model. Our proposed framework takes a significant step in this direction by introducing a penalty-functional that minimizes the deviation between the curvature of the fractal function and that of the reference spline. This leads to the development of curvature-preserving cubic FIFs (CFIFs) that maintain both the data values and the underlying geometric structure. By embedding curvature alignment into the interpolation process, our method enhances the geometric accuracy of fractal models and enables their effective use in applications where shape detail is critical.
	
	\par The proposed CFIF framework blends the smoothness and regularity of classical splines with the adaptability of fractal perturbations, offering a powerful tool for geometry-aware modeling. This is particularly beneficial in fields such as CAD/CAM, biomedical shape reconstruction, and image boundary modeling, where curvature encodes vital shape information. Our recursive construction uses scaling parameters $\lambda_i$, optimized through a curvature-constrained penalty strategy, to ensure that the interpolant not only matches the data but also closely follows the curvature profile of the reference spline. Moreover, the method can be extended to higher-dimensional interpolation, parametric curves, and constrained modeling scenarios. By providing localized geometric control while preserving global shape properties, curvature-preserving fractal interpolation offers a significant advancement in both theoretical interpolation and practical modeling, with future directions involving adaptive optimization, machine-learned fractal structures, and stochastic generalizations.
	
	\par We begin by introducing the general methodology for constructing fractal interpolation functions Section \ref{sec2}. Section \ref{sec3} develops the proposed curvature-preserving cubic fractal interpolation functions (CFIFs) within a penalty-based optimization framework. Section \ref{sec4} provides a theoretical analysis, including convergence properties, derivation of bounds on the scaling factors to ensure curvature preservation, and an investigation into the stability of curvature under perturbations in the scaling parameters. Section \ref{sec5} presents comprehensive numerical experiments on multiple datasets, demonstrating the practical effectiveness of the proposed approach. The paper concludes with a summary of findings and potential directions for future work.
	
	\setlength{\parindent}{0pt}
	\section{General Framework of Fractal Interpolation Functions}\label{sec2}
	We begin with the fundamental construct underlying fractal interpolation. A family of mappings $\mathcal{I} = \{X, w_t\}_{n=1}^N$ constitutes an iterated function system (IFS). The system $\mathcal{I}$ is hyperbolic when every mapping $w_i$ exhibits contractive behavior.
	
	Let $\Im := \{1, 2, \dots, M\}$ and $\Im^* := \{1, 2, \dots, M-1\}$. Consider a compact interval $I = [u_1, u_M]$ partitioned by points $x_1 < x_2 < \dots < x_M$, along with data points $\{(x_t, u_t) \in I \times K : t \in \Im_M\}$ where $K \subset \mathbb{R}$ is compact. For each subinterval $I_s = [x_s, x_{s+1}]$, where $s \in \Im^*$, introduce contractive homeomorphisms $L_s : I \to I_s$ that satisfy the boundary conditions:
	\begin{eqnarray}
		L_s(x_1) = x_s, \ L_s(x_M) = x_{s+1}, \ \text{for } s \in \Im^*,\\
		|L_s(x) - L_s(x^*)| \leq l_s |x - x^*|, \ \text{for all } x, x^* \in I, \text{ with } 0 < l_s < 1.
	\end{eqnarray}
	
	Define the space $C = I \times K$, and for each index $i$, consider continuous functions $F_i : C \to K$ that obey:
	\begin{eqnarray}
		F_s(x_1, u_1) = u_s, \ F_s(x_M, u_M) = u_{s+1}, \\
		|F_s(x, u) - F_s(x, u^*)| \leq |\lambda_s||u - u^*|, \ \text{for all } x \in I, \; u, u^* \in K, \; |\lambda_s| < 1.
	\end{eqnarray}
	
	The mappings $w_s : C \to I_s \times K$ are then constructed as $$ w_s(x, u) = (L_s(x), F_s(x, u)), \ \text{for } s \in \Im^*.$$
	
	\begin{proposition}(Barnsley \cite{Barnsley86})
		The IFS $\{C, w_i : s \in \Im^*\}$ possesses one and only one attractor $G^*$, which represents the graph of a continuous interpolant $\psi^* : I \to K$ that interpolates all data points $\psi^*(x_t) = u_u$ for $t \in \Im$. 
	\end{proposition}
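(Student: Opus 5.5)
The plan is to prove Barnsley's theorem through the Read--Bajraktarevi\'{c} operator on a complete metric space of continuous functions, and then read off the attractor as the graph of the fixed point. Let
\[
\mathcal{G}=\{g\in C(I,K): g(x_1)=u_1,\ g(x_M)=u_M\},
\]
with the sup metric $d(g,h)=\max_{x\in I}|g(x)-h(x)|$. This is a closed subset of the Banach space $C(I)$, and $K$ is compact, so $(\mathcal{G},d)$ is complete. On $\mathcal{G}$ define
\[
(Tg)(x)=F_s\bigl(L_s^{-1}(x),\,g(L_s^{-1}(x))\bigr),\qquad x\in I_s,\ s\in\Im^*.
\]

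\textbf{Step 1: $T$ maps $\mathcal{G}$ into itself.} On each $I_s$, $Tg$ is continuous because $L_s$ is a homeomorphism and $F_s$ and $g$ are continuous. The real issue is agreement at the interior knots $x_{s+1}$, which belong to both $I_s$ and $I_{s+1}$.
\begin{itemize}
\item From the $I_s$ side: $L_s^{-1}(x_{s+1})=x_M$, so the value is $F_s(x_M,u_M)=u_{s+1}$.
\item From the $I_{s+1}$ side: $L_{s+1}^{-1}(x_{s+1})=x_1$, so the value is $F_{s+1}(x_1,u_1)=u_{s+1}$.
\end{itemize}
The two definitions coincide, so $Tg$ is well defined and continuous on $I$. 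The same computation gives $Tg(x_1)=u_1$ and $Tg(x_M)=u_M$, and $Tg$ takes values in $K$ because $F_s$ maps into $K$. Hence $Tg\in\mathcal{G}$, and more generally $Tg(x_t)=u_t$ for every $t\in\Im$. This gluing argument is the main point needing care; in particular it is exactly where the boundary conditions on $L_s$ and $F_s$ are used.

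\textbf{Step 2: $T$ is a contraction.} For $x\in I_s$, the Lipschitz condition on $F_s$ in its second variable gives
\[
|Tg(x)-Th(x)|\le|\lambda_s|\,\bigl|g(L_s^{-1}x)-h(L_s^{-1}x)\bigr|.
\]
Taking the maximum over $s$ and $x$ yields $d(Tg,Th)\le \lambda\, d(g,h)$ with $\lambda=\max_s|\lambda_s|<1$. By the Banach fixed point theorem, $T$ has a unique fixed point $\psi^*\in\mathcal{G}$. Since every $Tg$ interpolates the data (Step 1) and $\psi^*=T\psi^*$, we get $\psi^*(x_t)=u_t$ for all $t\in\Im$.

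\textbf{Step 3: identify the attractor.} The fixed-point equation $\psi^*=T\psi^*$ says precisely that $G=\operatorname{graph}(\psi^*)$ satisfies $G=\bigcup_s w_s(G)$. For uniqueness of the attractor, I would follow Barnsley and replace the Euclidean metric on $C$ by the equivalent metric
\[
\rho\bigl((x,u),(x',u')\bigr)=|x-x'|+\theta|u-u'|,
\]
with $\theta>0$ small. I also assume, as in Barnsley's setting, that $F_s$ is Lipschitz in $x$. A short estimate then shows that each $w_s$ is a contraction with respect to $\rho$. By Hutchinson's theorem, the IFS has a unique nonempty compact invariant set $G^*$. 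Since $G$ is compact (the graph of a continuous function), nonempty and invariant, it follows that $G^*=G$ is the graph of $\psi^*$.

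The main obstacle is the well-definedness and continuity of $Tg$ at the knots in Step 1. The contraction-metric step needs the mild extra Lipschitz-in-$x$ assumption. Without it, one can still argue uniqueness directly: any compact invariant set is, by iterating the maps, forced to lie within distance $\lambda^n\,\mathrm{diam}K$ of the graph, and letting $n\to\infty$ shows it equals the graph.
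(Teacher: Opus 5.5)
Your proposal is correct and follows the same route the paper indicates after citing Barnsley: the Read--Bajraktarevi\'c operator on a complete space of continuous functions with the endpoint constraints, matching at the knots via $F_s(x_M,u_M)=u_{s+1}=F_{s+1}(x_1,u_1)$, the contraction factor $\max_s|\lambda_s|$, and Banach's theorem. Your Step 3 (graph invariance and uniqueness of the attractor) supplies what the paper leaves to the citation.

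One small fix is needed in your fallback uniqueness argument. The estimate $|u-\psi^*(x)|\le\lambda^n\,\mathrm{diam}K$ only gives $A\subseteq\operatorname{graph}(\psi^*)$, not equality. To close this, note that $\pi_x(A)$ is a nonempty compact set with $\pi_x(A)=\bigcup_s L_s(\pi_x(A))$. It therefore equals $I$, the unique attractor of the contractions $\{L_s\}$ on $I$ (since $\bigcup_s L_s(I)=I$).
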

	
	This interpolant $ \psi^* $, known as the fractal interpolation function (FIF), emerges from the fixed point of the Read--Bajraktarevi\'c operator. To derive its functional representation, consider the space:
	$$ \mathcal{G} = \left\{ g : I \to \mathbb{R} \ \middle|\ g \text{ is continuous, } g(x_1) = u_1,\ g(x_M) = u_M \right\}. $$
	Equipped with the uniform norm metric $ \nu $, $ (\mathcal{G}, \nu) $ forms a complete metric space. The Read-Bajraktarevi\'{c} operator $ \Upsilon : \mathcal{G} \to \mathcal{G} $ is specified as
	\begin{equation}
		\label{eq:read_bajraktarevic}
		\Upsilon g(x) = F_s\left(L_s^{-1}(x),\ g\left(L_s^{-1}(x)\right)\right), \ x \in I_s = [x_s, x_{s+1}],\ s \in \Im^*.
	\end{equation}
	
	The operator $ \Upsilon $ maintains continuity across subintervals and interior nodes while satisfying the contraction property:
	\begin{equation}
		\label{eq:contraction}
		\nu(\Upsilon f, \Upsilon g) = \|\Upsilon f - \Upsilon g\|_\infty \leq |\lambda|_1 \|f - g\|_\infty,
	\end{equation}
	where $ |\lambda|_1 = \max\{|\lambda_s| : s \in \Im^*\} < 1 $. Use of the Banach contraction principle ensures a unique fixed point $ \psi^* \in \mathcal{G} $ characterized by the self-referential equation:
	$$ \Upsilon(\psi^*) = \psi^* \Leftrightarrow \psi^*(x) = F_s\left(L_s^{-1}(x),\ \psi^*\left(L_s^{-1}(x)\right)\right), \ x \in I_s,\ s \in \Im^*. $$
	
	A particularly useful specialization employs affine mappings:
	\begin{equation}
		\label{eq:affine_fif}
		L_s(x) = a_s x + b_s, \ F_s(x, u) = \lambda_s u + H_s(x), \ s \in \Im^*.
	\end{equation}
	Here, $ H_s : I \to \mathbb{R} $ represents continuous functions satisfying interpolation constraints, while the parameters $ \lambda_i $ (collectively forming the scale vector $ \lambda $) govern the fractal characteristics of the interpolant. These scaling factors introduce geometric flexibility beyond conventional interpolation methods.
	
	The development of spline-based FIFs originated with Barnsley and Harrington~\cite{BarHar89}, with subsequent generalizations to rational spline FIFs by Chand et al.~\cite{VC2014} expanding the framework for constructing smooth, shape-preserving fractal interpolants.
	
	\begin{theorem}\label{theorem1}
		Let $\{(x_t, u_t) : t \in \Im\}$ be a set of data with $x_1 < x_2 < \cdots < x_M$. Consider the affine transformations defined by $ L_s(x) = a_s x + b_s, \ \text{where} \ a_s = \frac{x_{s+1} - x_s}{x_M - x_1}, \ b_s = \frac{x_M x_s - x_1 x_{s+1}}{x_M - x_1}, $ and the vertical mappings are $ F_s(x, u) = \lambda_s u + H_s(x)$. For some $p \in \mathbb{N}$, assume that the scaling parameters satisfy $|\lambda_s| < a_s^p$ for all $s$. Define $ F_s^{(m)}(x, u) = \frac{\lambda_s f^{(m)} + H_s^{(m)}(x)}{a_s^m}, \ m \in \mathbb{N}_p$, and $ f_1^{(m)} = \frac{H_1^{(m)}(x_1)}{a_1^m - \lambda_1}, \ f_n^{(m)} = \frac{H_{n-1}^{(m)}(x_M)}{a_M^m - \lambda_M}$. Suppose the following matching conditions hold:
		$ F_s^{(m)}(x_M, u_M^{(m)}) = F_{s+1}^{(m)}(x_1, u_1^{(m)}), \ \text{for}\ i \in \Im^*,\ m \in \mathbb{N}_p$.  Then the IFS $\{I \times K,\ w_s(x, f) = (L_s(x),\\ F_s(x, u)\}_{s \in \Im^*}$ determines a rational fractal interpolation function $F \in C^p[x_1, x_M]$ such that $ F(L_s(x)) = \lambda_s f + H_s(x)$, and the $m^{th}$ derivative, $F^{(m)}$, is the FIF with respect to the IFS $\{L_s(x), F_s^{(m)}(x, u)\}_{i\in \Im^*}$ for each $m \in \mathbb{N}_p$.
	\end{theorem}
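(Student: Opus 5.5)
This is the Barnsley--Harrington differentiability argument. I read the statement with $H_s\in C^p(I)$ and the indices in the endpoint formulas taken as intended, that is, $f_M^{(m)} = H_{M-1}^{(m)}(x_M)/(a_{M-1}^m-\lambda_{M-1})$. The plan is to build the derivative functions first, as attractors of their own IFSs, and then integrate back down to $F$.

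\textbf{Step 1: the candidate derivatives.} Fix $m\in\mathbb{N}_p$. Since $|\lambda_s|<a_s^p\le a_s^m$ (because $0<a_s<1$), each map $F_s^{(m)}(x,u)=(\lambda_s u+H_s^{(m)}(x))/a_s^m$ is contractive in $u$ with factor $|\lambda_s|/a_s^m<1$.

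The values $f_1^{(m)}$ and $f_M^{(m)}$ are exactly the fixed points of the endpoint conditions:
\[
F_1^{(m)}(x_1,f_1^{(m)})=f_1^{(m)}, \qquad F_{M-1}^{(m)}(x_M,f_M^{(m)})=f_M^{(m)}.
\]
Define the interior node values by $f_{s+1}^{(m)}:=F_s^{(m)}(x_M,f_M^{(m)})$. The matching hypothesis says this also equals $F_{s+1}^{(m)}(x_1,f_1^{(m)})$.

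Now work in the space $\mathcal{G}_m$ of continuous $g$ with $g(x_1)=f_1^{(m)}$ and $g(x_M)=f_M^{(m)}$. The Read--Bajraktarevi\'c operator $\Upsilon_m g(x)=F_s^{(m)}(L_s^{-1}(x),g(L_s^{-1}(x)))$ on $I_s$ is then well defined: it is continuous at the interior nodes by the matching condition and it keeps the endpoint values. It is a contraction exactly as in \eqref{eq:contraction}, so the Banach contraction principle gives a unique fixed point $\Phi_m$. Put $\Phi_0=F$.

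\textbf{Step 2: the key identity.} I claim $\Phi_{m-1}(x)=\Phi_{m-1}(x_1)+\int_{x_1}^x\Phi_m$ for each $m$. Define $\tilde\Phi(x)=f_1^{(m-1)}+\int_{x_1}^x\Phi_m(t)\,dt$. For $x\in I_s$, substitute $t=L_s(\tau)$, split the integral at $x_s$, and use the fixed-point equation for $\Phi_m$. This gives
\[
\tilde\Phi(L_s(y))=\tilde\Phi(x_s)+\int_{x_1}^{y}\bigl(\lambda_s\Phi_m(\tau)+H_s^{(m)}(\tau)\bigr)a_s^{1-m}\,d\tau
=\tilde\Phi(x_s)+\frac{\lambda_s\bigl(\tilde\Phi(y)-f_1^{(m-1)}\bigr)+H_s^{(m-1)}(y)-H_s^{(m-1)}(x_1)}{a_s^{m-1}}.
\]
This has the form $F_s^{(m-1)}(y,\tilde\Phi(y))$ provided $\tilde\Phi(x_s)=F_s^{(m-1)}(x_1,f_1^{(m-1)})$, i.e. provided the node values agree, $\tilde\Phi(x_s)=f_s^{(m-1)}$.

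\textbf{Step 3: the obstacle, node values.} Proving $\tilde\Phi(x_s)=f_s^{(m-1)}$ is where the real work lies. At $s=1$ it holds by definition. Evaluating the displayed identity at $y=x_M$ gives the recursion
\[
\tilde\Phi(x_{s+1})=\tilde\Phi(x_s)+\frac{\lambda_s\,\Delta+H_s^{(m-1)}(x_M)-H_s^{(m-1)}(x_1)}{a_s^{m-1}},\qquad \Delta=\tilde\Phi(x_M)-f_1^{(m-1)}.
\]
The values $f_s^{(m-1)}$ satisfy the same recursion, but with $\Delta$ replaced by $f_M^{(m-1)}-f_1^{(m-1)}$. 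So everything hinges on the single equation $\tilde\Phi(x_M)=f_M^{(m-1)}$.

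Summing the recursion over $s$ turns this into a scalar linear equation for $\Delta$, with coefficient $1-\sum_s\lambda_s a_s^{-(m-1)}$. Determining $\Delta$ from it needs this coefficient to be nonzero, which is the standard Barnsley--Harrington compatibility. I would try to get it from the hypotheses $|\lambda_s|<a_s^p$ together with the matching conditions; if that fails, I would add it as the implicit assumption, as in \cite{BarHar89}.

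\textbf{Conclusion.} Once the node values agree, uniqueness of the fixed point of $\Upsilon_{m-1}$ gives $\tilde\Phi=\Phi_{m-1}$. Hence $\Phi_{m-1}'=\Phi_m$ is continuous. Inducting from $m=p$ down to $m=1$ yields $F\in C^p[x_1,x_M]$, with $F^{(m)}=\Phi_m$ the FIF of the IFS $\{L_s(x),F_s^{(m)}(x,u)\}_{s\in\Im^*}$.
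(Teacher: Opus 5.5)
The paper gives no proof of this statement; it is quoted from the literature (Barnsley--Harrington), so your proposal has to be judged on its own. Steps 1 and 2 are correct, and so is the reduction in Step 3. The recursion for $\tilde\Phi(x_{s+1})-\tilde\Phi(x_s)$ holds unconditionally. The node values $f_s^{(m-1)}$ obey the same recursion, by the matching conditions at level $m-1$, or by the interpolation conditions when $m=1$. So everything reduces to $\tilde\Phi(x_M)=f_M^{(m-1)}$.

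The gap is that you stop exactly there. You leave the nonvanishing of $c_m:=1-\sum_s\lambda_s a_s^{-(m-1)}$ open and offer to add it as an extra hypothesis. That would prove a weaker theorem than the one stated, and it is unnecessary, because $c_m\neq 0$ follows directly from the hypotheses.

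Indeed, $\sum_{s=1}^{M-1}a_s=\sum_s (x_{s+1}-x_s)/(x_M-x_1)=1$, and for $1\le m\le p$ you have $|\lambda_s|a_s^{-(m-1)}<a_s^{p-m+1}\le a_s$. Hence
\[
\Bigl|\sum_{s}\lambda_s a_s^{-(m-1)}\Bigr|<\sum_s a_s=1, \qquad\text{so } c_m>0 .
\]
The matching conditions play no role here. Both $\Delta=\tilde\Phi(x_M)-f_1^{(m-1)}$ and $\Delta^*=f_M^{(m-1)}-f_1^{(m-1)}$ satisfy the same equation $c_m X=\sum_s a_s^{1-m}\bigl(H_s^{(m-1)}(x_M)-H_s^{(m-1)}(x_1)\bigr)$, so $\Delta=\Delta^*$. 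Then $\tilde\Phi(x_s)=f_s^{(m-1)}$ for all $s$ by induction on $s$, and your conclusion goes through.

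Alternatively, you can avoid the node bookkeeping entirely. Let $\Upsilon_{m-1}$ act on $\{g\in C^1(I): g(x_1)=f_1^{(m-1)},\ g(x_M)=f_M^{(m-1)},\ g'(x_1)=f_1^{(m)},\ g'(x_M)=f_M^{(m)}\}$ with the complete metric $\|g'-h'\|_\infty$.
\begin{itemize}
\item The matching conditions at levels $m-1$ and $m$ make this space invariant.
\item The identity $(\Upsilon_{m-1}g)'=\Upsilon_m(g')$ gives contraction factor $\max_s|\lambda_s|a_s^{-m}<1$.
\item Uniqueness of the fixed point in the larger space $\mathcal{G}_{m-1}$ identifies the fixed point with $\Phi_{m-1}$, and its derivative with $\Phi_m$.
\end{itemize}
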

	
	\section{Construction of Curvature Preserving Cubic Fractal Interpolation Functions (CP-CFIFs)}\label{sec3}
	This section introduces a curvature-preserving fractal interpolation scheme that builds upon classical cubic splines and incorporates a recursive self-affine structure. The formulation modifies the traditional IFS to enable curvature consistency with a given reference spline.
	
	\subsection{General Framework of Cubic FIFs}
	This section develops a $\mathcal{C}^1$-continuous cubic FIF (CFIF) building upon the foundation of Theorem~2.1. Given interpolation data $\{(x_t, u_t): t \in \Im\}$ with strictly monotone sequence $x_1 < x_2 < \cdots < x_M$, we define the IFS $\{I \times K; w_s(x, F) = (L_s(x), F_s(x, F)) : s \in \Im^*\}$. The affine maps take the form $L_s(x) = a_s x + b_s$, while the function component is governed by:
	\begin{equation}\label{eq:CFIF}
		F(L_s(x)) = \lambda_s F(x) + U_i(1-\theta)^3 + V_i\theta (1-\theta)^2 + W_i\theta^2 (1-\theta) + X_i \theta^3,
	\end{equation}
	with $\theta = \frac{x - x_1}{\ell}$, $\ell = x_M - x_1$, and scaling parameters satisfying $|\lambda_s| < a_s$ for $s \in \Im^*$.
	
	Imposing $\mathcal{C}^1$ continuity through the conditions $F_s(x_1, u_1) = u_i$, $F_s(x_M, u_M) = u_{s+1}$, $F_s^{(1)}(x_1, d_1) = d_s$, and $F_s^{(1)}(x_M, d_M) = d_{s+1}$ ensures the IFS attractor constitutes the graph of a $\mathcal{C}^1$-continuous CFIF. These conditions yield the coefficients:
	\begin{align*}
		& U_i = y_i - \lambda_i y_1,\  V_i = 3(y_i - \lambda_i y_1) + l(a_id_i-\lambda_id_1), \\
		& W_i = 3(y_{i+1} - \lambda_i y_N) - l(a_id_{i+1}-\lambda_id_N), \ X_i = y_{i+1} - \lambda_i y_N.
	\end{align*}
	
	Substitution of these coefficients into Equation (\ref{eq:CFIF}) produces the desired $\mathcal{C}^1$-continuous CFIF.
	
	\begin{remark}
		If $\lambda_s = 0$ for $s \in \Im^*$ reduces the CFIF $F$ to the traditional cubic interpolant $S(x)$. With the parameter transformation $z = \frac{x - x_s}{x_{s+1} - x_s}$, this classical spline takes the form:
		\begin{equation}
			S(x) = \bar{U}_i(1-z)^3 + \bar{V}_i z(1-z)^2 + \bar{W}_iz^2 (1-z) + \bar{X}_iz^3,
		\end{equation}
		where the coefficients simplify to $\bar{U}_i = y_i$, $\bar{V}_i = 3y_i + \ell (a_id_i)$, $\bar{W}_i = 3y_{i+1} - \ell(a_id_{i+1})$, and $\bar{X}_i = y_{i+1}$.
	\end{remark}
	
	\subsection{Curvature Preserving Cubic Fractal Interpolation Function}
	To address the challenge of preserving curvature while retaining the self-affine flexibility of fractal interpolation, we introduce a modified construction of CFIFs. Our method augments the classical IFS framework by incorporating a traditional cubic spline interpolant along with carefully controlled recursive perturbations. The aim is to define a fractal interpolant that not only interpolates the given data but also approximates the curvature of the cubic spline. We accomplish this through optimization of the local scaling parameters through a penalty-based functional that measures curvature deviation. The subsequent development details the recursive construction of the CFIF, derives the necessary conditions for interpolation and curvature consistency, and introduces the optimization framework employed to ensure geometric fidelity.
	
	Consider the data $\{(x_s, y_s)\}_{s \in \Im^*} $ with strictly monotonically increasing sequence $x_s$. Define the affine contractions $ L_s : [x_1, x_M] \to [x_s, x_{s+1}] $ by $$ L_s(x) = a_s x + b_s, \ \text{where} \ a_s = \frac{x_{s+1} - x_s}{x_M - x_1}, \ b_s = x_s - a_s x_1.$$ Denoting the classical spline $ S(x) $, the modified FIF $ F(x) $ is constructed through the recursive relation
	\begin{equation}\label{NewF}
		F(L_s(x)) = \lambda_s F(x) + S(x) + \delta_s(x),
	\end{equation}
	where $ \lambda_s \in (-1,1) $ are local vertical scaling parameters, and $ \delta_s(x) $ are smooth correction functions. To ensure that $ F $ satisfies the interpolation conditions $ F(x_t) = u_t $, the correction functions $ \delta_s(x) $ are defined to enforce consistency with the reference spline. A natural choice is $$ \delta_s(x) = (1 - \lambda_s) \left[ S(x) - L_s^\sharp(x) \right],$$ where $ L_s^\sharp(x) $ is the linear interpolant passing through $ (x_s, u_s) $ and $ (x_{s+1}, y_{s+1}) $. This formulation guarantees that $ F $ interpolates the given data while preserving the local geometric characteristics of the spline $ S $.
	
	%
	
	\begin{remark}
		With $F$ given by Equation (\ref{NewF}), we have $F(x_t) = u_t$,\ $\forall t \in \Im $.
	\end{remark}
	
	\subsection{Penalty-Based Optimization Framework}
	To enforce curvature preservation, we define a penalty functional that quantifies the deviation in curvature between the fractal interpolant $ F $ and the reference spline $ S $:
	$$
	J(\lambda) = \int_{x_1}^{x_N} \left( \kappa_F(x) - \kappa_S(x) \right)^2 \, dx,
	$$
	where $ \kappa_F(x) $ and $ \kappa_S(x) $ denote the curvatures of $ F $ and $ S $, respectively.
	The optimization problem is to minimize the functional $ J(\lambda) $ subject to the scaling parameters: $
	\min_{\lambda_i \in (-1,1)} J(\lambda).$ This leads to a nonlinear constrained optimization problem, which admits a numerical solution.
	
	\section{Theoretical Analysis}\label{sec4}
	We now develop the theoretical framework for rigorous analysis of the curvature-preserving FIF defined in the previous section. We establish the interpolation property, prove convergence of the FIF to the reference cubic spline under suitable conditions, derive bounds on curvature deviation in terms of the scaling factors $ \lambda_i $, and analyze the numerical stability of curvature approximation. We first show that the proposed fractal interpolant $ F $ converges uniformly to the classical interpolant $ S $ as the scaling parameters $ \lambda_i \to 0 $. This is a crucial consistency property ensuring that the fractal model generalizes the classical scheme.
	\begin{theorem}\label{theorem3}
		Let $ F(x) $ be the fractal function defined by $ F(L_s(x)) = \lambda_s F(x) + S(x) + \delta_s(x),$
		where $ \lambda_s \in (-1,1) $ and $ \delta_s(x) $ satisfies $ \delta_s(x) = (1 - \lambda_s)[S(x) - L_s^\sharp(x)]$. Then as $ \max_s |\lambda_s| \to 0 $, we have $ \|F - S\|_\infty \to 0,$ i.e., $ F $ converges uniformly to $ S $.
	\end{theorem}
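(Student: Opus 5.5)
The plan is to treat $F$ as the fixed point of the Read--Bajraktarevi\'{c} operator of Section~\ref{sec2} and to compare it with $S$ through the standard fixed-point perturbation estimate. Write $q_s(x) := S(x) + \delta_s(x)$ for the non-self-referential part of (\ref{NewF}). Then $F$ is the unique fixed point in $\mathcal{G}$ of
\[
\Upsilon g(x) = \lambda_s\, g\bigl(L_s^{-1}(x)\bigr) + q_s\bigl(L_s^{-1}(x)\bigr), \qquad x \in I_s,\ s \in \Im^*.
\]
By (\ref{eq:contraction}), $\Upsilon$ is a contraction with constant $|\lambda|_1 = \max_s|\lambda_s| < 1$. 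For any $g \in \mathcal{G}$ the triangle inequality gives
\[
\|F - g\|_\infty \le \|\Upsilon F - \Upsilon g\|_\infty + \|\Upsilon g - g\|_\infty \le |\lambda|_1 \|F-g\|_\infty + \|\Upsilon g - g\|_\infty .
\]
Taking $g = S$ (which lies in $\mathcal{G}$ since it interpolates the endpoints) yields
\[
\|F - S\|_\infty \le \frac{\|\Upsilon S - S\|_\infty}{1 - |\lambda|_1}.
\]
So it suffices to show that the residual $\|\Upsilon S - S\|_\infty$ tends to $0$ as $|\lambda|_1 \to 0$.

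For the residual, fix $s$ and $x \in I_s$ and write $t = L_s^{-1}(x) \in I$. Then
\[
\Upsilon S(x) - S(x) = \lambda_s S(t) + \bigl[q_s(t) - S(L_s(t))\bigr].
\]
The first term is at most $|\lambda_s|\,\|S\|_\infty$. For the second term I will use the consistency built into the construction: by the Remark following (\ref{eq:CFIF}), setting all $\lambda_s = 0$ must reproduce the classical interpolant, i.e.\ $q_s|_{\lambda_s=0} = S\circ L_s$. Since $\delta_s$ depends affinely on $\lambda_s$ through the factor $(1-\lambda_s)$, $q_s$ is an affine function of $\lambda_s$ with continuous coefficient functions on the compact interval $I$. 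Hence
\[
\|q_s - S\circ L_s\|_\infty \le |\lambda_s|\, \bigl\|S - L_s^\sharp\bigr\|_\infty \le C|\lambda_s|,
\]
with $C = \max_s \|S - L_s^\sharp\|_\infty$, which is finite and independent of $\lambda$. Combining the two terms gives $\|\Upsilon S - S\|_\infty \le (\|S\|_\infty + C)\,|\lambda|_1$, and therefore
\[
\|F - S\|_\infty \le \frac{(\|S\|_\infty + C)\,|\lambda|_1}{1-|\lambda|_1} \longrightarrow 0 .
\]
This also gives an explicit $O(|\lambda|_1)$ rate.

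The main obstacle is the second step: justifying that the base term reduces exactly to $S\circ L_s$ at $\lambda = 0$. Read literally, (\ref{NewF}) with $\lambda_s = 0$ gives $q_s = 2S - L_s^\sharp$ evaluated at the unscaled variable $x$, which is not $S(L_s(x))$. I will therefore first make precise the intended normalization: $S$ and $L_s^\sharp$ in $\delta_s$ are to be read in the local variable, so that $q_s$ at $\lambda_s = 0$ is the restriction of the spline to $I_s$ pulled back by $L_s$, consistent with the Remark after (\ref{eq:CFIF}). I will check this against the interpolation Remark $F(x_t) = u_t$. Once that identification is fixed, everything else is the routine contraction argument above, and the only quantitative input is the uniform boundedness of $S$ and $L_s^\sharp$ on $I$.
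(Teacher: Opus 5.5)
\textbf{There is a genuine gap, at exactly the step you flagged, and your proposed normalization does not close it.}

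Read $S$ and $L_s^\sharp$ in the local variable, as you suggest. Then
\[
q_s|_{\lambda_s=0} = S\circ L_s + (S-L_s^\sharp)\circ L_s ,
\]
which equals $S\circ L_s$ only if $S$ is linear on $I_s$. Keeping the leading $S$ global is no better, since then $q_s|_{\lambda_s=0}-S\circ L_s = S-L_s^\sharp\circ L_s$. The obstruction is that $S$ is counted twice in $S+\delta_s$; the choice of variable does not matter.

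Your step ``$q_s$ is affine in $\lambda_s$, hence $\|q_s-S\circ L_s\|_\infty\le|\lambda_s|\,\|S-L_s^\sharp\|_\infty$'' bounds the $\lambda_s$-dependent part correctly. However, it silently drops the $\lambda$-independent part. On $I_s$ one actually has
\[
\Upsilon S-S=\lambda_s\, S\circ L_s^{-1}+(1-\lambda_s)(S-L_s^\sharp),
\]
whose sup norm tends to $\max_s\|S-L_s^\sharp\|_{L^\infty(I_s)}$, not to $0$. So your collage bound only gives $\|F-S\|_\infty=O(1)$.

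\textbf{This cannot be repaired inside the stated construction, because the statement as written is false.} Your own contraction argument shows that $F\to F_0$ uniformly, where $F_0$ is the fixed point at $\lambda=0$. In your local reading, $F_0=2S-L_s^\sharp$ on $I_s$. For example, take the data $(0,0),(1,1),(2,0)$ and the natural spline, which equals $S(x)=\tfrac32x-\tfrac12x^3$ on $[0,1]$. Then $F_0(\tfrac12)=\tfrac78\neq\tfrac{11}{16}=S(\tfrac12)$.

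The check against the interpolation Remark that you planned would also fail. Here $q_s(x_1)=u_s$ instead of $u_s-\lambda_s u_1$. Hence $F(x_1)=u_1/(1-\lambda_1)$, and $\Upsilon$ does not even map $\mathcal{G}$ into itself, so you would have to work with bounded functions.

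\textbf{The paper's proof has the identical hole.}
\begin{itemize}
\item It asserts without justification that $\Upsilon^*$ fixes $S$ when $\lambda_s=0$.
\item It then uses a bound $\frac{\|\lambda\|_\infty}{1-\|\lambda\|_\infty}\|\delta\|_\infty$ that is not what the Banach theorem gives.
\item It claims $\|\delta\|_\infty\to0$, which is false, since $\delta_s\to S-L_s^\sharp$.
\end{itemize}
So your route is the paper's route, carried out more carefully, but neither proves the claim.

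\textbf{A true version needs a different construction.} One option is the standard $\alpha$-fractal form
\[
F(L_s(x))=\lambda_sF(x)+S(L_s(x))-\lambda_s b(x), \qquad b(x_1)=u_1,\ b(x_M)=u_M,
\]
for example with $b$ the chord joining the endpoints. Then $\Upsilon$ preserves $\mathcal{G}$, and $\Upsilon S-S=\lambda_s(S-b)\circ L_s^{-1}$ on $I_s$. Your argument then gives, verbatim,
\[
\|F-S\|_\infty\le\frac{|\lambda|_1}{1-|\lambda|_1}\|S-b\|_\infty .
\]
This is presumably the intended result, with $S-b$ playing the role of the paper's $\delta$.
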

	
	\begin{proof}
		Define the operator $ \Upsilon^* $ acting on the Banach space $ C(I) $ by:
		$$ (\Upsilon^*f)(x) = \lambda_s f(L_s^{-1}(x)) + S(L_s^{-1}(x)) + \delta_s(L_s^{-1}(x)), x \in I_s:= [x_s ,x_{s+1}]. $$
		Since $ |\lambda_s| < 1 $, $ \Upsilon^* $ is a contraction and hence has one and only one fixed point $ F \in C(I) $. Note that $ \Upsilon^* $ becomes the identity operator on $ S $ when $ \lambda_s = 0 $, because then $ \delta_s(x) = S(x) - L_s^\sharp(x) $ and $ F = S $.
		
		Let $ \epsilon > 0 $. Since $ S \in C(I) $, it is uniformly continuous and bounded. Because $ \Upsilon^* $ is contractive with contraction factor $ \|\lambda\|_\infty $, the Banach fixed-point theorem yields:
		$$ \|F - S\|_\infty \leq \frac{\|\lambda\|_\infty}{1 - \|\lambda\|_\infty} \|S + \delta - S\|_\infty = \frac{\|\lambda\|_\infty}{1 - \|\lambda\|_\infty} \|\delta\|_\infty. $$
		Since $ \|\delta\|_\infty \to 0 $ as $ \|\lambda\|_\infty \to 0 $, we conclude $ \|F - S\|_\infty \to 0 $.
	\end{proof}
	
	Now we present a simplified theoretical bound for the curvature deviation. The subsequent theorem explains the direct impact of scaling parameters on the curvature behavior of the fractal function.
	
	\begin{theorem}[Curvature-Constrained Scaling Factor Bound]\label{theorem4}
		Let $ F(x) $ be a cubic CFIF defined recursively by $ F(L_s(x)) = \lambda_s F(x) + S(x)$, where $ L_s(x) = a_s x + b_s $, with $ |a_s| < 1 $, and $ S(x) \in \mathcal{C}^2([x_1, x_M]) $. Suppose the derivatives of $ S $ are bounded such that $ |S'(x)| \leq K,\ |S''(x)| \leq C, \ \text{for all } x \in [x_1, x_M]$. Then, for the curvature of the CFIF to remain close to the curvature of the spline, i.e., $ |\kappa_F(x) - \kappa_S(x)| \leq \varepsilon$, a sufficient condition is:
		$$ |\lambda_s| \leq \min \left\{a_s, \frac{3C}{\varepsilon (1 + K^2)^{3/2}}, \frac{\varepsilon \cdot (1 + K^2)^{3/2}}{\|F''(x)\|_\infty}\right\}, \ \forall s,$$ where $ \|F''(x)\|_\infty = \sup_{x \in [x_1, x_M]} |F''(x)| $.
	\end{theorem}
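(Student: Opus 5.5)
The plan is to differentiate the self-referential equation, express $F'$ and $F''$ on each subinterval $I_s$, and then compare $\kappa_F=F''/(1+F'^2)^{3/2}$ with $\kappa_S=S''/(1+S'^2)^{3/2}$ pointwise.

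\textbf{Step 1: local formulas for $F'$ and $F''$.} Since $|\lambda_s|\le a_s$, Theorem~\ref{theorem1} (with $p=2$ formally, or $p=1$ together with a limiting argument) gives enough smoothness to differentiate $F(L_s(x))=\lambda_s F(x)+S(x)$ twice. Writing $y=L_s(x)\in I_s$, this yields
\begin{align*}
F'(y)&=\frac{\lambda_s F'(x)+S'(x)}{a_s},\\
F''(y)&=\frac{\lambda_s F''(x)+S''(x)}{a_s^2}.
\end{align*}
The first constraint $|\lambda_s|\le a_s$ in the minimum is exactly what makes the derivative IFS contractive. So the first entry of the bound is accounted for by invoking Theorem~\ref{theorem1}.

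\textbf{Step 2: splitting the curvature difference.} Write
\begin{align*}
\kappa_F-\kappa_S={}&\frac{F''-S''}{(1+F'^2)^{3/2}}\\
&+S''\Big[(1+F'^2)^{-3/2}-(1+S'^2)^{-3/2}\Big].
\end{align*}
I will treat the comparison in the "reference'' sense the statement suggests: the spline contribution is the baseline, and the deviation is driven by the fractal term $\lambda_s F''$. The denominators are handled by $(1+t^2)^{-3/2}\le 1$ and by the bound $|S'|\le K$, which is where the factor $(1+K^2)^{3/2}$ enters.

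\textbf{Step 3: bounding the two pieces.} For the first piece, the perturbation part of $F''$ is controlled by $|\lambda_s|\,\|F''\|_\infty$. Requiring $|\lambda_s|\,\|F''\|_\infty\le \varepsilon(1+K^2)^{3/2}$ produces the third entry of the minimum.

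For the second piece, I would use the mean value theorem on $g(t)=(1+t^2)^{-3/2}$. Since $|g'(t)|\le 3|t|(1+t^2)^{-5/2}$, this reduces the piece to a multiple of $C$ times $|F'-S'|$, and $|F'-S'|$ is itself proportional to $|\lambda_s|$ via the Step 1 formula. The condition relating $\lambda_s$, $3C$, $\varepsilon$ and $(1+K^2)^{3/2}$ is the one I expect to correspond to the second entry of the minimum. I would try to arrange the estimate so that the constant $3$ from $g'$ appears.

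\textbf{Main obstacle.} The hard step is the second piece. The stated inequality $|\lambda_s|\le 3C/(\varepsilon(1+K^2)^{3/2})$ has $\varepsilon$ in the denominator, which runs opposite to a genuine smallness condition. A direct proof will therefore likely not produce exactly this form.

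What I would try first is to bound $|F'-S'|$ using the fixed-point estimate from the proof of Theorem~\ref{theorem3}, applied to the derivative IFS. I would then absorb constants and check whether the author's form arises as a sufficient condition on a normalized quantity. If it does not, I would present the result with the two $\lambda$-dependent constraints actually produced by the estimates. Either way, the structure of the argument is Step 1, then Step 2, then Step 3, with Steps 1 and 3 routine.
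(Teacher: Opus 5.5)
The decisive gap is in Step~3, and your own Step~1 formula exposes it: the recursion $F(L_s(x))=\lambda_sF(x)+S(x)$ does not reduce to $S$ at $\lambda=0$. For $y\in I_s$ and $x=L_s^{-1}(y)$,
\[
F''(y)-S''(y)=\frac{\lambda_s}{a_s^2}\,F''(x)+\Big[\frac{S''(x)}{a_s^2}-S''(y)\Big],
\]
and the same holds for $F'-S'$ with $a_s$ in place of $a_s^2$. The bracket does not depend on $\lambda$ and is generically nonzero. So neither $F''-S''$ nor $F'-S'$ is $O(|\lambda_s|)$. Moreover, the $\lambda$-term is only bounded by $|\lambda_s|\,\|F''\|_\infty/a_s^2$, and $a_s^{-2}>1$.

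Hence no upper bound on $|\lambda_s|$ can force $|\kappa_F-\kappa_S|\le\varepsilon$, and the statement is false as written. Your fallback of presenting ``the constraints actually produced by the estimates'' cannot work for this recursion either. Concretely, take $[0,2]$ with nodes $0,1,2$, so $a_s=\tfrac12$. Let $S(x)=\cos\pi x$ (the hypotheses only ask $S\in\mathcal{C}^2$) and $\lambda_s=0$; this satisfies the displayed bound for every $\varepsilon>0$. The recursion then gives $F(y)=\cos 2\pi y$ on all of $[0,2]$, a smooth function. Its curvature gap is $|\kappa_F(0)-\kappa_S(0)|=4\pi^2-\pi^2=3\pi^2$, so the conclusion fails for every $\varepsilon<3\pi^2$.

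The remaining steps fail independently.
\begin{itemize}
\item In Step~1, $|\lambda_s|\le a_s$ makes only the first-derivative IFS contractive, and only with strict inequality. The second-derivative IFS $a_s^2F''(L_s(x))=\lambda_sF''(x)+S''(x)$ has factor $|\lambda_s|/a_s^2$. So Theorem~\ref{theorem1} with $p=2$ needs $|\lambda_s|<a_s^2$ plus node-matching conditions, and no limiting argument from $p=1$ supplies $F''$.
\item In Steps~2--3, $|S'|\le K$ gives $(1+S'^2)^{3/2}\le(1+K^2)^{3/2}$. That is the wrong direction for placing $(1+K^2)^{3/2}$ in the tolerance. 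At a point where $F'=0$, requiring $|F''-S''|\le\varepsilon(1+K^2)^{3/2}$ bounds your first piece only by $\varepsilon(1+K^2)^{3/2}$, not by $\varepsilon$.
\item Your doubt about the second entry is well founded. The mean value theorem bounds the second piece by $C\sup|g'|\,\|F'-S'\|_\infty$. That leads to a constraint scaling like $\varepsilon/C$, whereas the stated entry scales like $C/\varepsilon$.
\end{itemize}

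Following the paper will not close these gaps. Its proof is heuristic (``$\lesssim$'', $\|F''\|_\infty\approx C$, $F\approx S+\lambda_sH$). It uses the same reversed bound $(1+F'^2)^{3/2}\ge(1+K^2)^{3/2}$. It reaches the $3C/(\varepsilon(1+K^2)^{3/2})$ entry by inverting $3C/|\lambda_s|<\varepsilon(1+K^2)^{3/2}$, which actually says $|\lambda_s|>3C/(\varepsilon(1+K^2)^{3/2})$.

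A provable result needs a recursion with $F=S$ at $\lambda=0$. One choice is the $\alpha$-fractal form $F(L_s(x))=S(L_s(x))+\lambda_s(F-b)(x)$, with $b$ agreeing with $S$ to second order at $x_1$ and $x_M$, together with $|\lambda_s|<a_s^2$. Then $\|F''-S''\|_\infty=O(\max_s|\lambda_s|/a_s^2)$. The resulting sufficient condition has $\varepsilon$ in the numerator and no favorable factor $(1+K^2)^{3/2}$.
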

	
	\begin{proof}
		Let the fractal interpolant $ F(x) $ be defined recursively via the functional equation:
		$$ F(L_s(x)) = \lambda_s F(x) + S(x) + \delta_s(x), \ \text{with} \ \delta_s(x) = (1 - \lambda_s)[S(x) - L_s^\sharp(x)],$$ where $ L_s(x) = a_s x + b_s $ is an affine map such that $ L_s(x_1) = x_s $, $ L_s(x_M) = x_{s+1} $, and $ L_s^\sharp(x) $ is the linear interpolant between $ (x_s, u_s) $ and $ (x_{s+1}, u_{s+1}) $. The function $ S(x) $ denotes the classical cubic spline interpolant.
		
		Thus, the curvature difference between the fractal interpolant $ F $ and the spline $ S $ is:
		$$ |\kappa_F(x) - \kappa_S(x)| = \left| \frac{|F''(x)|}{(1 + (F'(x))^2)^{3/2}} - \frac{|S''(x)|}{(1 + (S'(x))^2)^{3/2}} \right|.$$
		
		Using the triangle inequality and assuming that $ |F'(x) - S'(x)| $ and $ |F''(x) - S''(x)| $ are small for small $ \lambda_s $, we approximate: $$ \kappa_F(x) - \kappa_S(x)| \lesssim \frac{|F''(x) - S''(x)|}{(1 + K^2)^{3/2}},$$ where $ K = \|S'\|_\infty $. To ensure $|\kappa_F(x) - \kappa_S(x)| < \varepsilon$, it suffices to have $\|F''(x) - S''(x)\|_\infty < \varepsilon (1 + K^2)^{3/2}$. 
		
		From the recursive structure of $ F(x) $, differentiating the functional equation twice gives $$
		F''(L_s(x)) a_i^2 = \lambda_s F''(x) + (2 - \lambda_s) S''(x).$$ 
		
		Thus, $$ F''(x) - S''(x) = \frac{1}{\lambda_s} \left( F''(L_s(x)) a_s^2 - 2 S''(x) \right).$$
		
		Taking the norm and assuming $ \|F''\|_\infty \approx \|S''\|_\infty = C $, and $ a_s \leq 1 $, we obtain:
		$$\|F''(x) - S''(x)\|_\infty \leq \frac{a_s^2 C + 2C}{|\lambda_i|} \leq \frac{3C}{|\lambda_s|}.$$
		
		By substituting into the curvature condition, we get $$ \frac{3C}{|\lambda_s|} < \varepsilon (1 + K^2)^{3/2} \ \Rightarrow \ |\lambda_s| < \frac{3C}{\varepsilon (1 + K^2)^{3/2}}.	$$ Assuming the perturbation $ \delta_s(x) $ is small (for $ \lambda_s $ near 1), we model $ F(x) $ as a first-order perturbation of $ S(x) $, i.e., $ F(x) \approx S(x) + \lambda_s H(x)$, for some auxiliary function $ H(x) $. 
		
		This implies $ F'(x) \approx S'(x) + \lambda_s H'(x), \ F''(x) \approx S''(x) + \lambda_s H''(x)$. Thus, the curvature deviation is approximated by: $$ |\kappa_F(x) - \kappa_S(x)| = \left| \frac{|F''(x)|}{(1 + F'(x)^2)^{3/2}} - \frac{|S''(x)|}{(1 + S'(x)^2)^{3/2}} \right|. $$
		
		Assuming $ \lambda_s $ is small enough so that $ F'(x) \approx S'(x) $, the denominator can be bounded below as:
		$$ (1 + F'(x)^2)^{3/2} \geq (1 + \|S'\|_\infty^2)^{3/2} =: (1 + K^2)^{3/2}. $$
		For the numerator, we estimate: $$ |F''(x) - S''(x)| \leq |\lambda_s| \cdot \|H''\|_\infty \approx |\lambda_s| \cdot \|S''\|_\infty =: |\lambda_s| C.$$
		
		Combining these, we obtain the tighter curvature bound $ |\kappa_F(x) - \kappa_S(x)| \leq \frac{|\lambda_s| C}{(1 + K^2)^{3/2}}$. Imposing the constraint $ |\kappa_F - \kappa_S| < \varepsilon $ leads to $ |\lambda_i| < \frac{\varepsilon (1 + K^2)^{3/2}}{C}$.
	\end{proof}
	
	\subsection{Stability Analysis}
	
	A crucial property of any interpolation scheme is its stability with respect to perturbations in its parameters. The following theorem establishes that the curvature of the proposed Cubic Fractal Interpolation Function (CFIF) is stable under small variations in the vertical scaling factors.
	
	\begin{theorem}[Curvature Stability]
		Let \( F \) and \( \bar{F} \) be two cubic fractal interpolation functions defined on the interval \( I = [x_1, x_N] \), generated by the IFS mappings:
		\[
		F(L_s(x)) = \lambda_s F(x) + P_s(x), \quad \bar{F}(L_s(x)) = \bar{\lambda}_s \bar{F}(x) + P_s(x),
		\]
		for \( s \in \Im^* \), where \( |\lambda_s|, |\bar{\lambda}_s| < 1 \), and \( P_s(x) \) is the cubic polynomial satisfying the endpoint conditions. Let \( \kappa_F \) and \( \kappa_{\bar{F}} \) denote the curvatures of \( F \) and \( \bar{F} \), respectively. Then, for sufficiently small perturbations \( \|\bar{\lambda} - \lambda\|_\infty = \max_s |\bar{\lambda}_s - \lambda_s| \), there exist constants \( C_1, C_2 > 0 \) such that:
		\[
		|\kappa_F(x) - \kappa_{\bar{F}}(x)| \leq C_1 \|\bar{\lambda} - \lambda\|_\infty + C_2 \|\bar{\lambda} - \lambda\|_\infty^2, \quad \forall x \in I.
		\]
	\end{theorem}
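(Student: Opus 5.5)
We need $F,\bar F$ to be $C^2$ so curvature makes sense. By Theorem~\ref{theorem1} with $p=2$, we assume $|\lambda_s|,|\bar\lambda_s|<a_s^2$. Then $F''$ and $\bar F''$ are the FIFs of the derived IFS
\[
F''(L_s(x)) = \frac{\lambda_s}{a_s^2}F''(x) + \frac{P_s''(x)}{a_s^2},
\]
and similarly for $F'$ with factor $a_s^{-1}$. The plan is to estimate $\|F^{(m)}-\bar F^{(m)}\|_\infty$ for $m=0,1,2$ linearly in $\|\bar\lambda-\lambda\|_\infty$. We then push this through the curvature map $(p,q)\mapsto |q|(1+p^2)^{-3/2}$, which is Lipschitz on bounded sets.

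\textbf{Step 1 (derivative perturbation).} Fix $m\in\{0,1,2\}$ and write $\Upsilon_m,\bar\Upsilon_m$ for the Read--Bajraktarevi\'c operators of the $m$-th derived IFS, with contraction factors
\[
\gamma_m=\max_s|\lambda_s|/a_s^m<1,\qquad \bar\gamma_m=\max_s|\bar\lambda_s|/a_s^m<1 .
\]
For $x\in I_s$,
\[
F^{(m)}(x)-\bar F^{(m)}(x)=\frac{\lambda_s-\bar\lambda_s}{a_s^m}\,F^{(m)}(L_s^{-1}x)+\frac{\bar\lambda_s}{a_s^m}\bigl(F^{(m)}-\bar F^{(m)}\bigr)(L_s^{-1}x),
\]
since $P_s$ cancels. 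The boundary values $f^{(m)}_1,f^{(m)}_N$ fixed in Theorem~\ref{theorem1} also depend on $\lambda$. I would treat $P_s$ as fixed data, as in the statement, and simply check that the difference of endpoint constants is $O(|\lambda-\bar\lambda|)$ from their explicit rational formulas with nonvanishing denominators $a^m-\lambda$. Taking sup norms gives
\[
\|F^{(m)}-\bar F^{(m)}\|_\infty\le \frac{\|F^{(m)}\|_\infty}{(1-\bar\gamma_m)\min_s a_s^m}\,\|\bar\lambda-\lambda\|_\infty=:D_m\|\bar\lambda-\lambda\|_\infty .
\]
Here $\|F^{(m)}\|_\infty\le \max_s\|P_s^{(m)}\|_\infty/\bigl(a_s^m(1-\gamma_m)\bigr)$ by the usual fixed-point bound. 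For perturbations small enough that $\bar\gamma_2\le(1+\gamma_2)/2$, the constants $D_m$ are uniform.

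\textbf{Step 2 (curvature map).} Write $\kappa=\Phi(F',F'')$ with $\Phi(p,q)=|q|(1+p^2)^{-3/2}$. Then
\[
|\kappa_F-\kappa_{\bar F}|\le \frac{\bigl||F''|-|\bar F''|\bigr|}{(1+F'^2)^{3/2}}+|\bar F''|\,\bigl|(1+F'^2)^{-3/2}-(1+\bar F'^2)^{-3/2}\bigr| .
\]
The first term is at most $D_2\|\bar\lambda-\lambda\|_\infty$. For the second, the derivative of $p\mapsto(1+p^2)^{-3/2}$ is bounded by $3|p|(1+p^2)^{-5/2}\le 3$, and $|\bar F''|\le\|F''\|_\infty+D_2\|\bar\lambda-\lambda\|_\infty$. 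So the second term is at most
\[
3\bigl(\|F''\|_\infty+D_2\|\bar\lambda-\lambda\|_\infty\bigr)D_1\|\bar\lambda-\lambda\|_\infty .
\]
This yields the claim with $C_1=D_2+3D_1\|F''\|_\infty$ and $C_2=3D_1D_2$, which explains the quadratic term.

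\textbf{Main obstacle.} The delicate point is Step 1 for $m=2$. It needs the strengthened hypothesis $|\lambda_s|,|\bar\lambda_s|<a_s^2$, so that the derived IFS is contractive and $F''$ exists. It also needs the $\lambda$-dependence of the derivative boundary values and matching conditions of Theorem~\ref{theorem1} to be controlled. I would first try to absorb this by regarding $P_s=P_s(\lambda)$ as depending smoothly on $\lambda$ through the coefficients $U_i,\dots,X_i$, which are affine in $\lambda$. This adds a term $\|P_s^{(m)}(\lambda)-P_s^{(m)}(\bar\lambda)\|_\infty/a_s^m=O(\|\bar\lambda-\lambda\|_\infty)$ to Step 1 without changing its structure.
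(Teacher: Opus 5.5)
Your proposal is correct and follows essentially the same route as the paper: a fixed-point perturbation estimate giving $\|F^{(m)}-\bar F^{(m)}\|_\infty=O(\|\bar\lambda-\lambda\|_\infty)$ for $m=0,1,2$, followed by the same add-and-subtract split of the curvature difference and a mean-value bound on $p\mapsto(1+p^2)^{-3/2}$. Your version is more careful than the paper's. You make explicit the hypothesis $|\lambda_s|,|\bar\lambda_s|<a_s^2$, together with the $C^2$ matching conditions, which the paper's ``differentiate and apply similar reasoning'' step silently needs. You also actually derive the quadratic term from $|\bar F''|\le\|F''\|_\infty+D_2\|\bar\lambda-\lambda\|_\infty$, whereas the paper only gestures at it.
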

	
	\begin{proof}
		Let \( e(x) = F(x) - \bar{F}(x) \) denote the difference between the two interpolants. From the IFS equations, the value of \( e \) at a subinterval satisfies:
		\[
		e(L_s(x)) = F(L_s(x)) - \bar{F}(L_s(x)) = \lambda_s F(x) + P_s(x) - \left( \bar{\lambda}_s \bar{F}(x) + P_s(x) \right).
		\]
		Simplifying and adding and subtracting \( \lambda_s \bar{F}(x) \) yields:
		\[
		e(L_s(x)) = \lambda_s (F(x) - \bar{F}(x)) + (\lambda_s - \bar{\lambda}_s) \bar{F}(x) = \lambda_s e(x) - \delta\lambda_s \cdot \bar{F}(x),
		\]
		where \( \delta\lambda_s = \bar{\lambda}_s - \lambda_s \). This functional equation for \( e(x) \) is itself a fractal function. Since \( \max_s |\lambda_s| \leq \alpha < 1 \) and \( \bar{F} \) is bounded on the compact interval \( I \), it follows from the properties of iterated function systems that:
		\[
		\|e\|_\infty = \mathcal{O}(\|\bar{\lambda} - \lambda\|_\infty).
		\]
		By differentiating the IFS relation and applying similar reasoning to \( e'(x) \) and \( e''(x) \), we obtain the corresponding bounds for the derivatives:
		\[
		\|e'\|_\infty = \mathcal{O}(\|\bar{\lambda} - \lambda\|_\infty), \quad \|e''\|_\infty = \mathcal{O}(\|\bar{\lambda} - \lambda\|_\infty).
		\]
		
		The curvature is given by \( \kappa_F(x) = \frac{F''(x)}{(1 + (F'(x))^2)^{3/2}} \). Consider the difference:
		\[
		|\kappa_F(x) - \kappa_{\bar{F}}(x)| = \left| \frac{F''}{(1 + (F')^2)^{3/2}} - \frac{\bar{F}''}{(1 + (\bar{F}')^2)^{3/2}} \right|.
		\]
		Adding and subtracting \( \frac{\bar{F}''}{(1 + (F')^2)^{3/2}} \) allows us to split the difference:
		\[
		|\kappa_F - \kappa_{\bar{F}}| \leq \underbrace{\frac{|F'' - \bar{F}''|}{(1 + (F')^2)^{3/2}}}_{T_1} + \underbrace{|\bar{F}''| \left| \frac{1}{(1 + (F')^2)^{3/2}} - \frac{1}{(1 + (\bar{F}')^2)^{3/2}} \right|}_{T_2}.
		\]
		Since \( F' \) is bounded away from infinity, the denominator in \( T_1 \) is bounded below, implying \( T_1 = \mathcal{O}(\|e''\|_\infty) = \mathcal{O}(\|\bar{\lambda} - \lambda\|_\infty) \). For \( T_2 \), the function \( \phi(a) = (1+a^2)^{-3/2} \) is continuously differentiable, and \( \bar{F}'' \) is bounded. By the Mean Value Theorem, \( T_2 = \mathcal{O}(|F' - \bar{F}'|) = \mathcal{O}(\|e'\|_\infty) = \mathcal{O}(\|\bar{\lambda} - \lambda\|_\infty) \). The potential quadratic term arises from a more detailed expansion of \( T_2 \) when considering the interaction between the errors in the function and its derivatives. Combining these estimates yields the desired result.
	\end{proof}
	
	This result formally confirms that the curvature of the CFIF depends in a Lipschitz-continuous manner on the vertical scaling parameters. Consequently, the proposed method is stable, as small perturbations in the scaling factors, whether from optimization, numerical computation, or noise, induce only proportionally small changes in the resulting geometry.
	
	\section{Numerical Results}\label{sec5}
	
	We present numerical experiments for three data sets exhibiting varying degrees of geometric complexity. In each case, we compare the curvature of the classical cubic spline interpolant $S(x)$ with that of the proposed fractal interpolant $F(x)$. The curvature error, defined as $|\kappa_F(x) - \kappa_S(x)|$, and the root mean square error (RMSE) between $F(x)$ and $S(x)$ are computed and summarized with respect to the appropriate scaling factors $\lambda_s$ obtained from the Theorem \ref{theorem4}:
	
	\subsection{Test Case 1: Low-Curvature Data}
	
	We begin with a nearly linear data set: $ \{(0, 1),\ (1, 1.5),\ (2, 2),\ (3, 2.5),\ (4, 3)\},$ which represents a low-curvature scenario. In this case, it is straightforward to verify that the maximum slope is $K = 0.5$ and the second derivative vanishes, i.e., $M = 0$. Therefore, according to Theorem~\ref{theorem4}, the scaling factors can be chosen such that $\lambda_i < 1$. Since the data segments are piecewise linear, the curvature is identically zero, and the curvature constraint is automatically satisfied for any $\lambda_i < 1$. 	Figure~\ref{fig:curv_low_combined} shows the graphs of the classical cubic spline, the corresponding curvature-preserving cubic fractal interpolation function, and the curvature profiles of both interpolants. As anticipated, the curvature deviation is negligible in this low-complexity case, affirming the effectiveness of the proposed method.
	
	\begin{figure}[!htb]
		\centering
		\begin{tabular}{ccc}
			\subfloat[Classical Cubic Spline $S$]{\includegraphics[width=4.2cm]{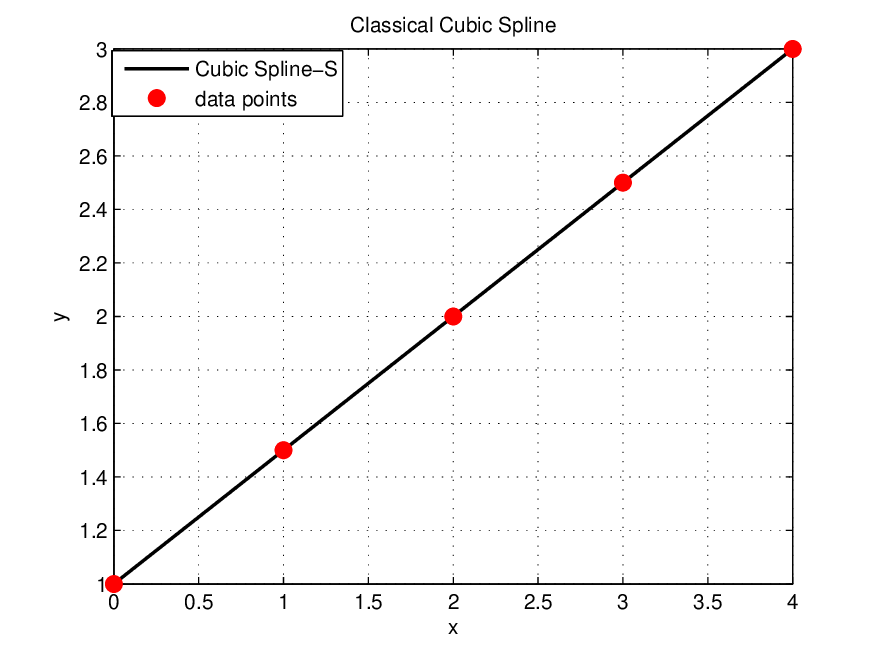}\label{fig_cl_4_d1}} &
			\subfloat[Cubic FIF $F$]{\includegraphics[width=4.2cm]{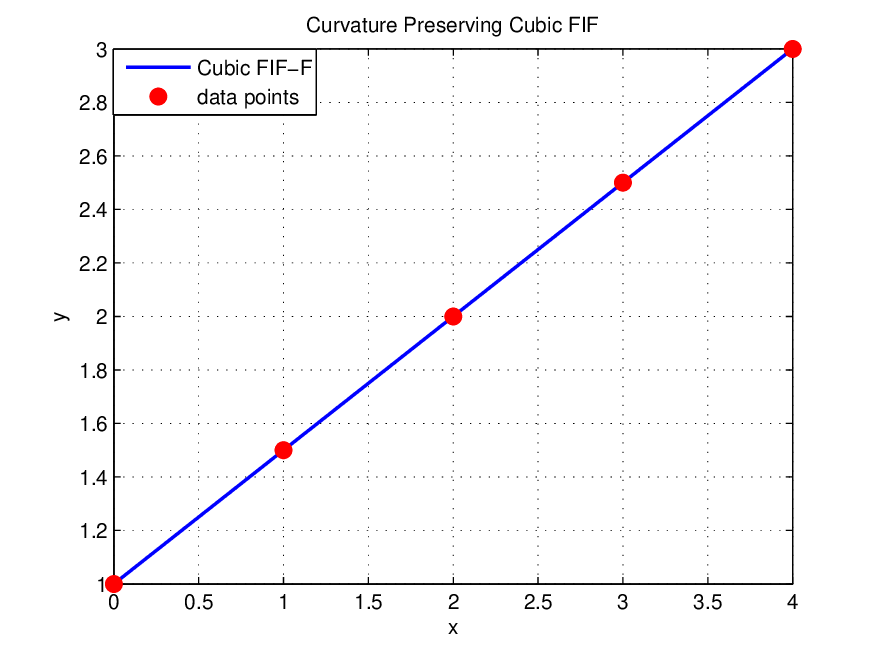}\label{fig_CFIF_4_d1}} & 
			\subfloat[Curvature Comparison]{\includegraphics[width=4.2cm]{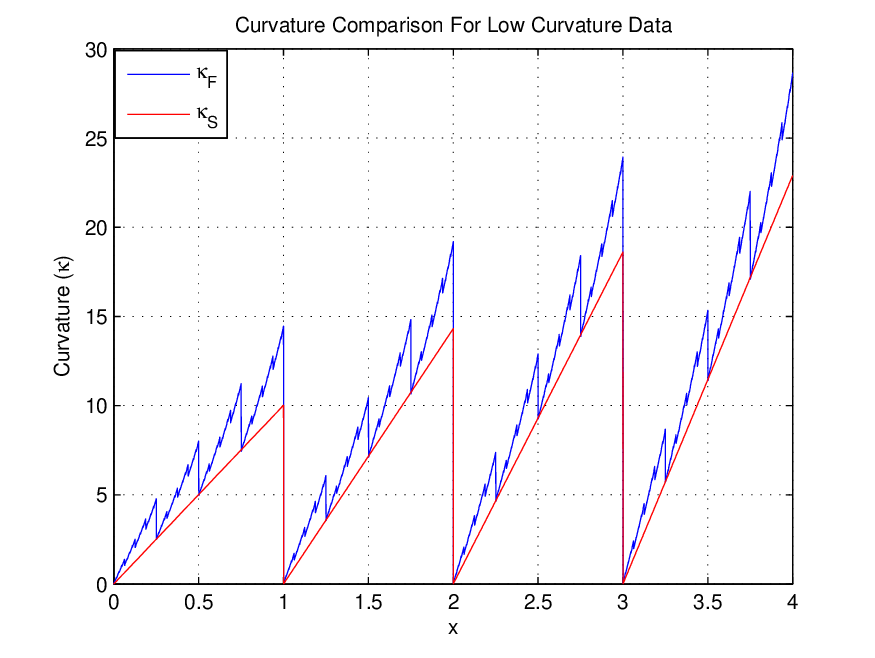}\label{curv_comp_d1}}
		\end{tabular}
		\caption{Curvature Comparison and Error for Low-Curvature Data}
		\label{fig:curv_low_combined}
	\end{figure}
	
	\subsection{Test Case 2: High-Curvature Data}
	We now examine a data set exhibiting a sharp geometric bend: $\{(0, 0),(1, 2),(2, -1),\\ (3, 2),(4, 0)\}$ constructed to evaluate curvature fidelity in the presence of rapid changes in slope. In this case, the curvature deviation is significantly higher, particularly in regions with large gradients. A straightforward analysis yields $K \approx 3.587$ as the maximum slope and $M \approx 14.571$ as the approximate maximum of the second derivative. According to Theorem~\ref{theorem4}, the scaling factors must satisfy $\lambda_i < 0.303$ to ensure the curvature deviation remains within a tolerable range. Figure~\ref{fig:curv_high_combined}  shows the classical cubic spline, the curvature-preserving fractal interpolation function, and a comparison of their curvature profiles.
	
	\begin{figure}[!htb]
		\centering
		\begin{tabular}{ccc}
			\subfloat[Classical Cubic Spline $S$]{\includegraphics[width=4.2cm]{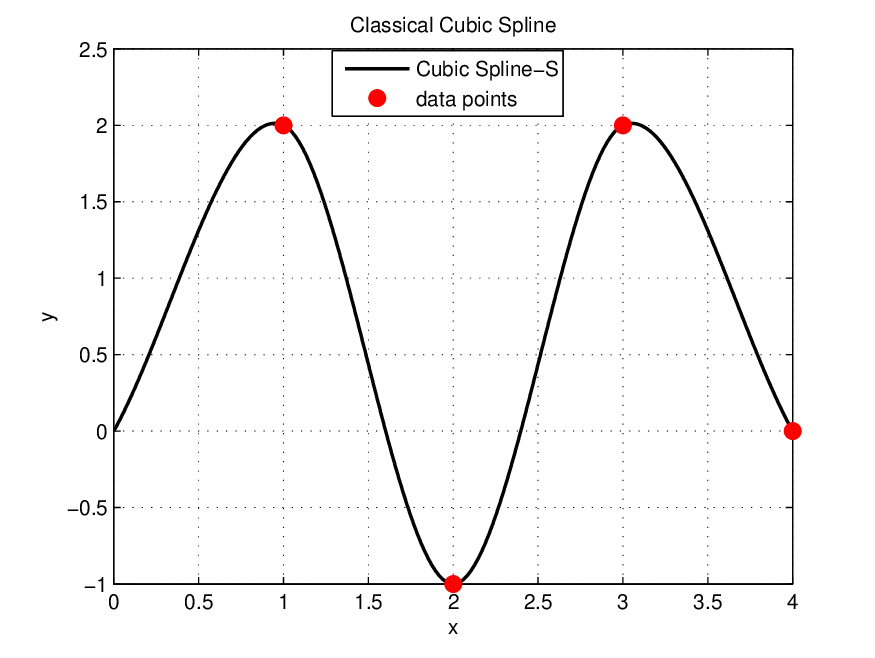}\label{fig_cl_4_d2}} &
			\subfloat[Cubic FIF $F$]{\includegraphics[width=4.2cm]{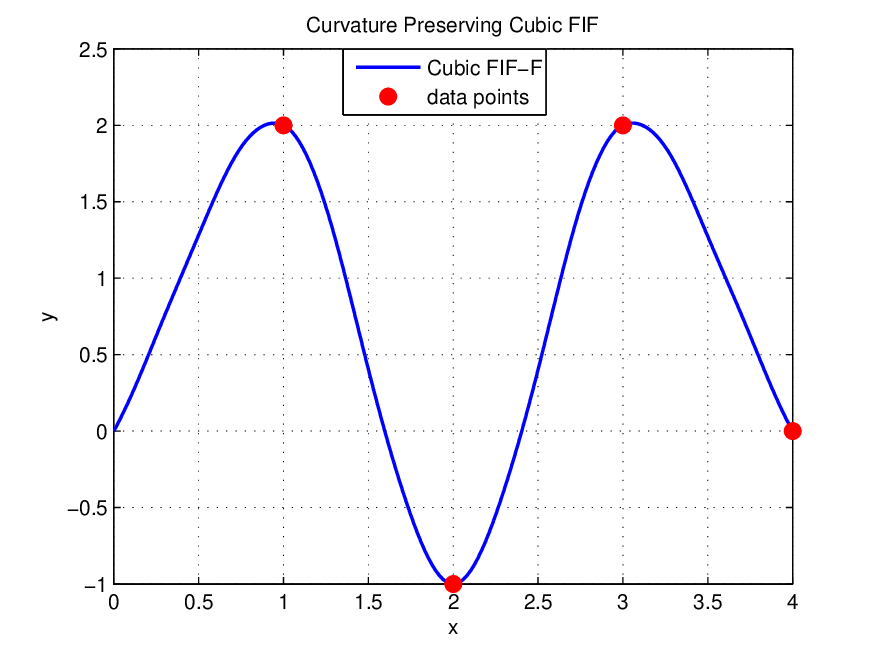}\label{fig_CFIF_4_d2}} & 
			\subfloat[Curvature Comparison]{\includegraphics[width=4.2cm]{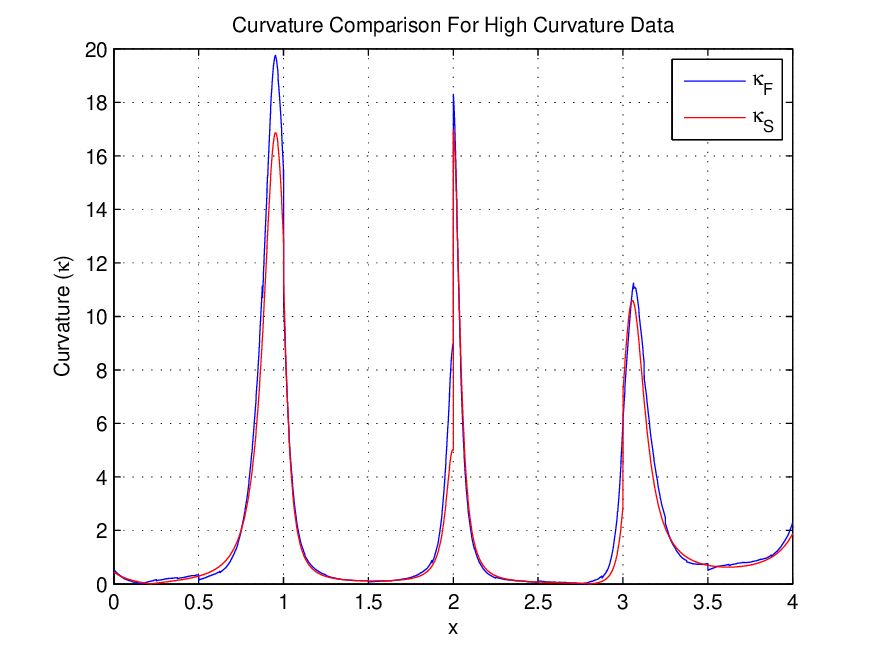}\label{curv_comp_d2}}
		\end{tabular}
		\caption{Curvature Comparison and Error for High-Curvature Data}
		\label{fig:curv_high_combined}
	\end{figure}
	
	\subsection{Test Case 3: Noisy Data}
	Finally, we consider a noisy data set generated by perturbing a sine function: $y_i = \sin(x_i) + \varepsilon_i, \ \varepsilon_i \sim \mathcal{N}(0, 0.1),$ sampled over a uniform grid. We took $\epsilon_i = 0.1$. This test case mimics real-world scenarios in which otherwise smooth data is corrupted by random measurement noise. It is readily observed that the maximum slope is approximately $K \approx 1.310$ and the estimated maximum second derivative is $M \approx 2.559$. According to Theorem~\ref{theorem4}, the scaling factors should satisfy the constraint $\lambda_i < 0.149$ to ensure the curvature error remains bounded. Figure~\ref{fig:curv_noisy_combined} displays the classical cubic spline, the curvature-preserving fractal interpolation function, and a comparison of their curvature profiles.
	
	\begin{figure}[!htb]
		\centering
		\begin{tabular}{ccc}
			\subfloat[Classical Cubic Spline $S$]{\includegraphics[width=4.2cm]{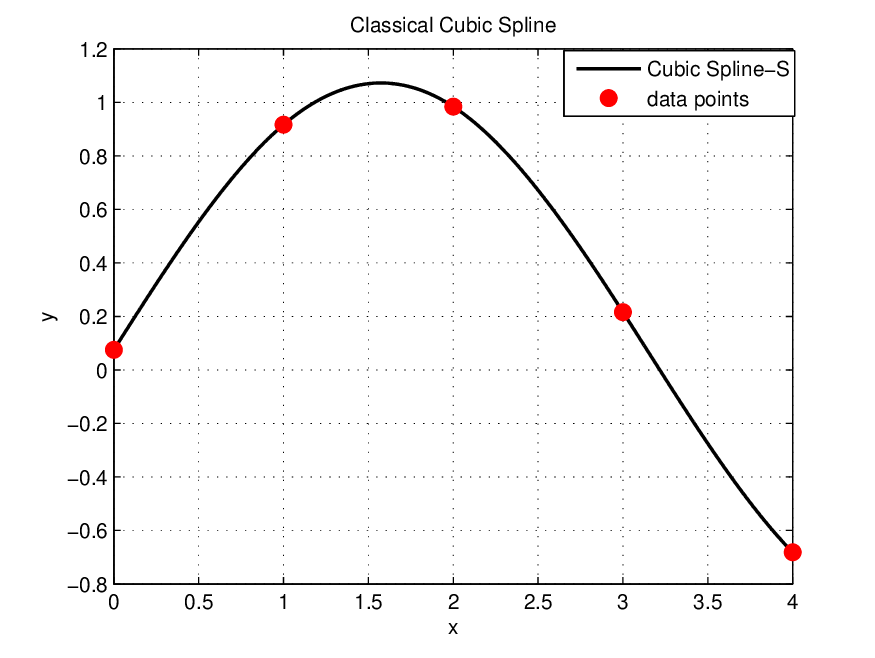}\label{fig_cl_4_d3}} &
			\subfloat[Cubic FIF $F$]{\includegraphics[width=4.2cm]{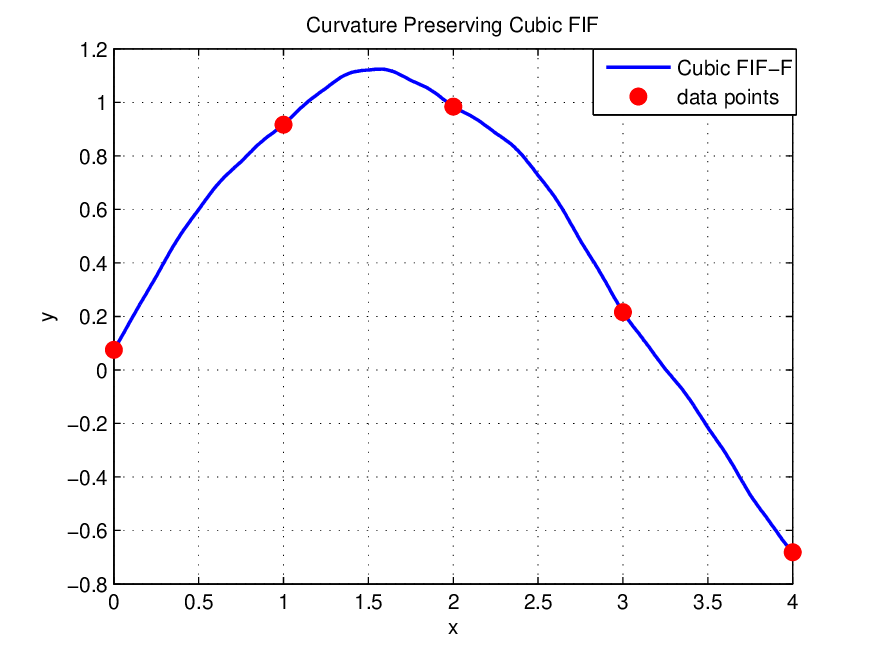}\label{fig_CFIF_4_d3}} &
			\subfloat[Curvature Comparison]{\includegraphics[width=4.2cm]{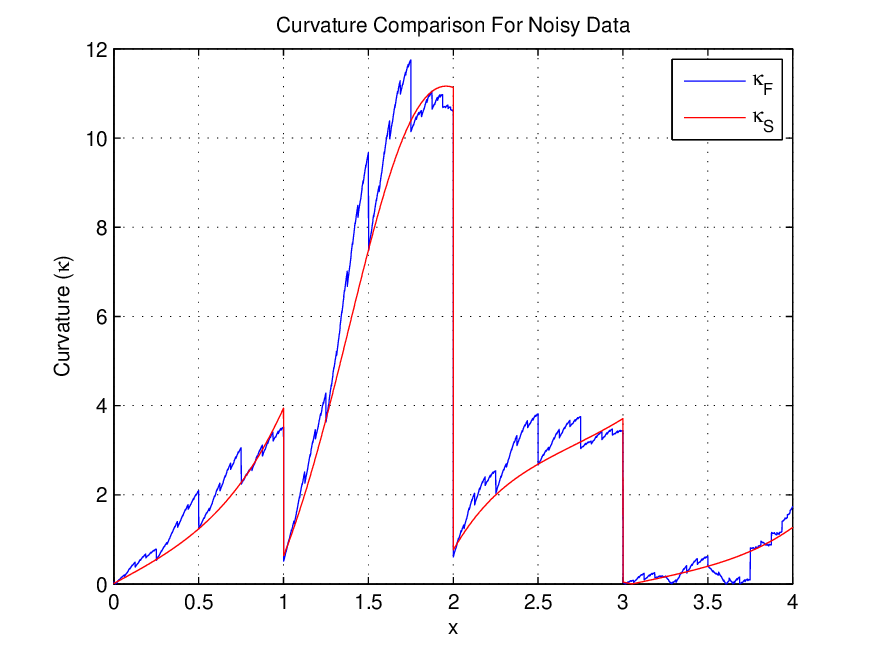}\label{curv_comp_d3}}
		\end{tabular}
		\caption{Curvature Comparison and Error for Noisy Data}
		\label{fig:curv_noisy_combined}
	\end{figure}
	
	\noindent
	These results in Table~\ref{tab:error} demonstrate that the curvature-preserving fractal interpolant performs well in low-curvature regimes and achieves better fidelity than classical cubic splines in the presence of noise or sharp transitions.
	
	We conducted timing experiments comparing our curvature-preserving cubic FIF with classical interpolation methods (linear, cubic spline, and PCHIP) across all datasets. Table~\ref{tab:timing} shows the average execution times for different evaluation point counts. The results indicate a computational overhead of approximately 200--250$\times$ compared to cubic splines, consistent across datasets. This cost stems from the iterative fractal refinement and curvature optimization. However, absolute execution times remain practical (sub-second), and the trade-off is justified by the superior curvature preservation shown earlier. Computational complexity exhibits approximately linear scaling ($O(n^{0.16})$ for low-curvature data), while classical methods show near-constant time complexity. Our cubic FIF framework provides a fundamental approach to geometric fidelity. While advanced schemes like rational fractal interpolation \cite{Nav2002}-\cite{VNC2016} offer additional shape control, they increase parameter complexity and computational cost. Our approach balances mathematical tractability with curvature preservation guarantees. Future work includes extensions to rational fractal schemes and higher-dimensional modeling, which would further illuminate trade-offs between model complexity and geometric fidelity.
	
	\begin{table}[!htb]
		\centering
		\caption{RMSE and Maximum Curvature Error for Various Datasets}
		\label{tab:error}
		\begin{tabular}{|c|c|c|c|} \hline
			Dataset Type & $\lambda$ Values & RMSE $\|F - S\|$ & Max Curvature Error \\ \hline
			Low Curvature & $[0.01,\ 0.011,\ 0.012,\ 0.013]$ & $4.4408 \times 10^{-16}$ & $5.7674$ \\ 
			High Curvature & $[0.01,\ 0.011,\ 0.012,\ 0.013]$ & $0.0390$ & $3.9932$ \\ 
			Noisy Data & $[0.01,\ 0.011,\ 0.012,\ 0.013]$ & $0.0060$ & $2.1922$ \\ \hline
		\end{tabular}
	\end{table}
	
	\begin{figure}[!htb]
		\centering
		\begin{tabular}{ccc}
			\subfloat[First Derivative of CFIF: Lower-Curvature Data]{\includegraphics[width=4.2cm, height = 3.5 cm]{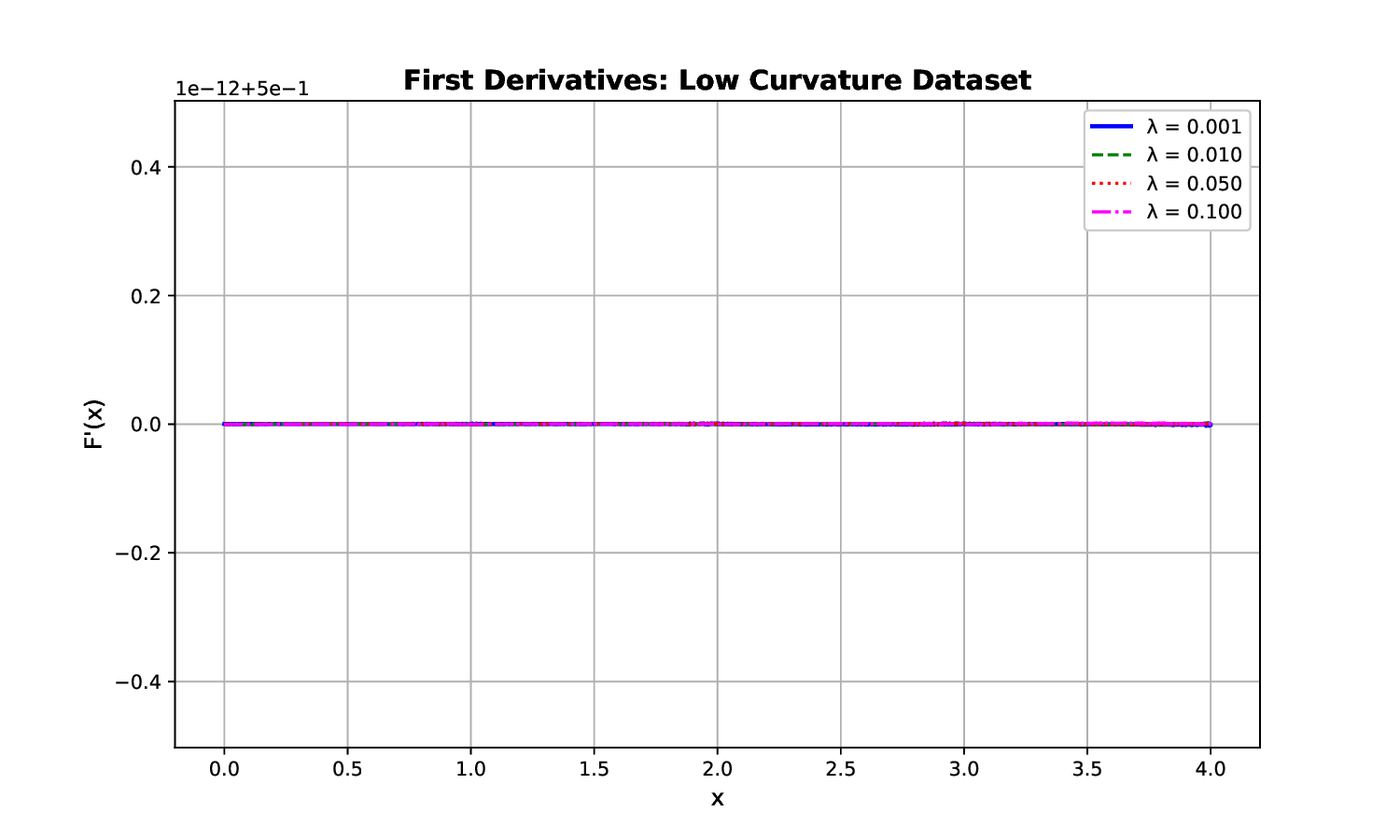}\label{1st_der_CFIF_LCD}} &
			\subfloat[First Derivative of CFIF: Lower-Curvature Data]{\includegraphics[width=4.2cm, height = 3.5 cm]{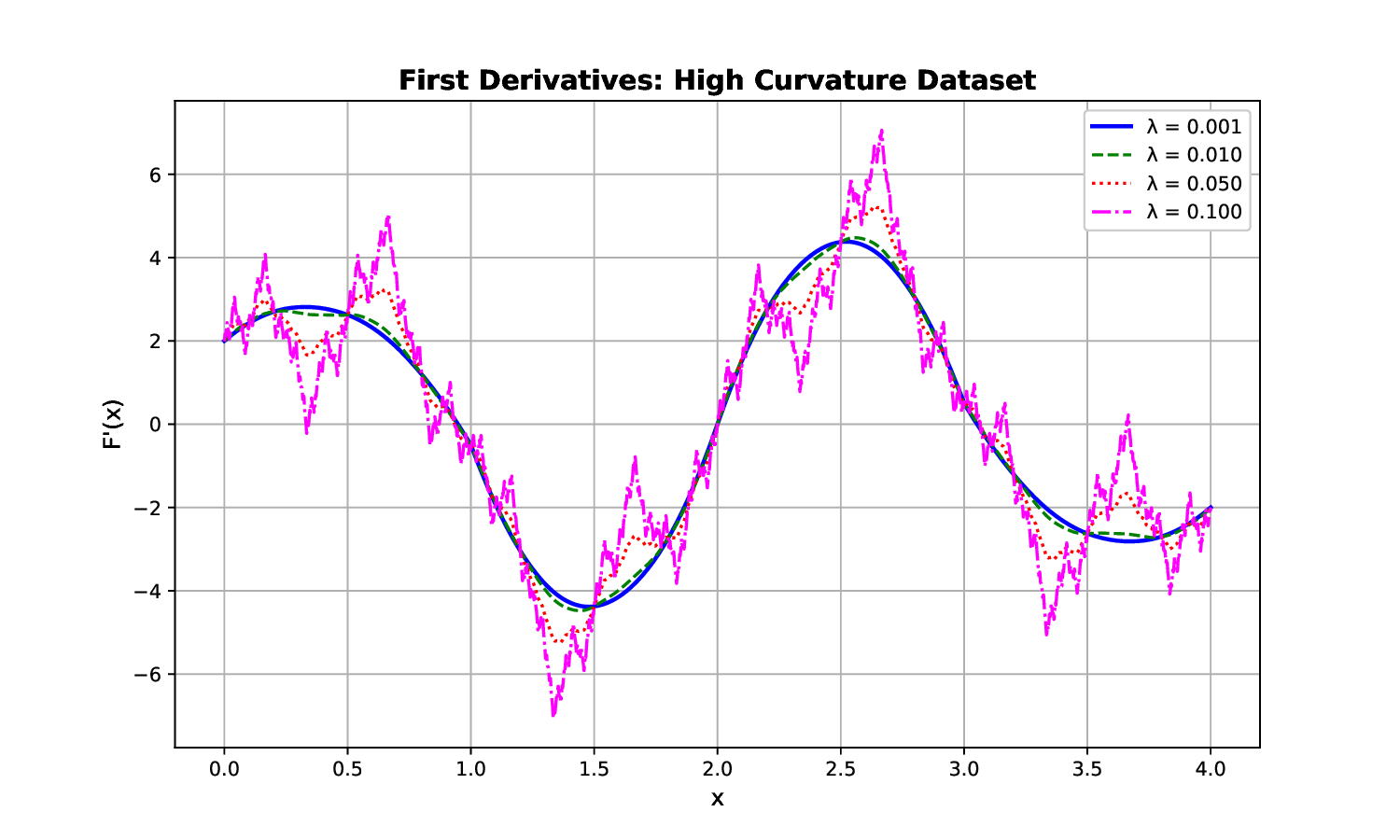}\label{1st_der_CFIF_HCD}} &
			\subfloat[First Derivative of CFIF: Lower-Curvature Data]{\includegraphics[width=4.2cm, height = 3.5 cm]{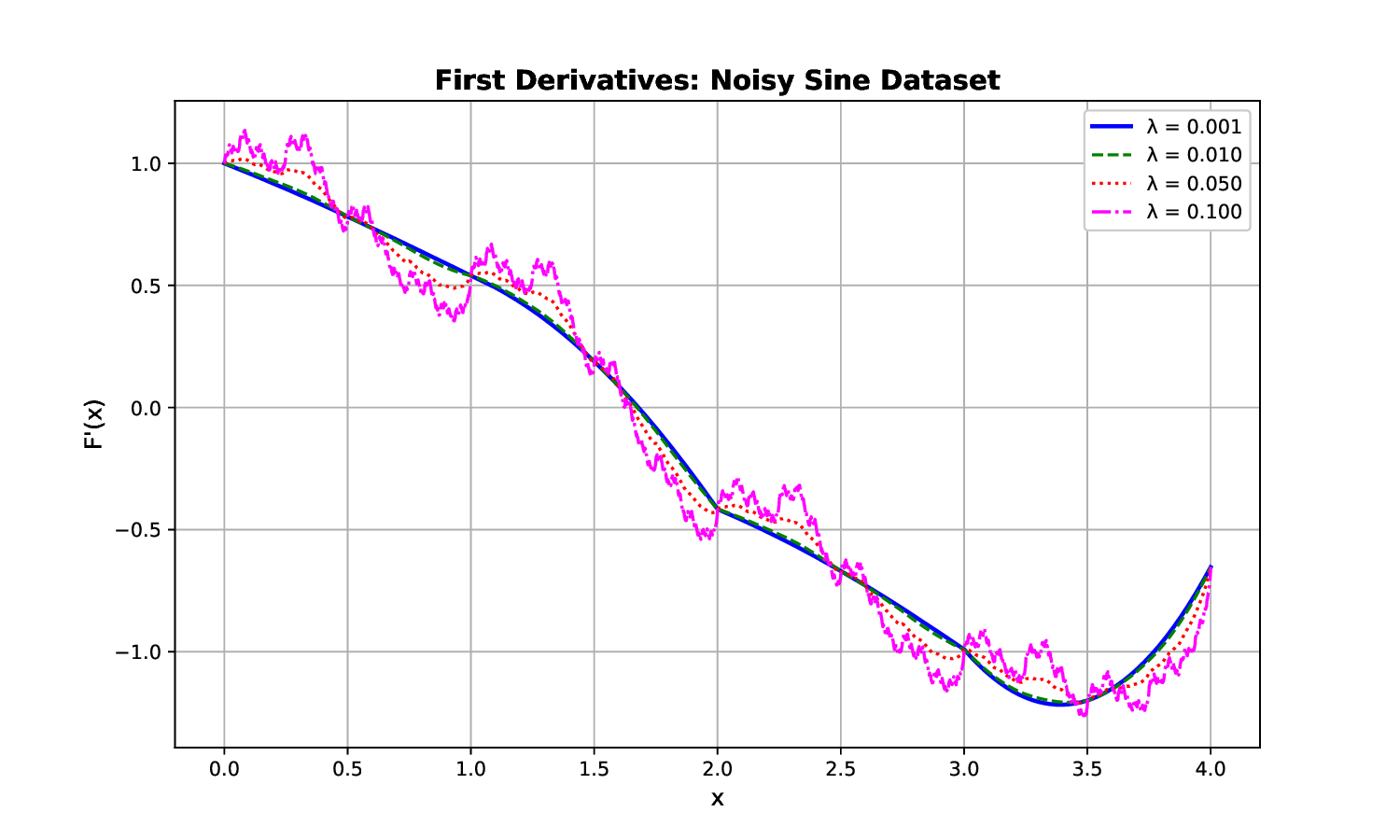}\label{1st_der_CFIF_NS}}
		\end{tabular}
		\caption{Parameter sensitivity analysis showing first derivatives for different scaling factors $\lambda$ across all datasets.}
		\label{fig:param_sensitivity}
	\end{figure}
	
	\begin{table}[htbp]
		\centering
		\caption{Computational performance comparison (execution times in seconds)}
		\label{tab:timing}
		\begin{tabular}{|l|c|c|c|c|c|c|c|}
			\hline
			\textbf{Dataset} & \textbf{Method} & \textbf{10 pts} & \textbf{20 pts} & \textbf{50 pts} & \textbf{100 pts} & \textbf{200 pts} & \textbf{500 pts} \\
			\hline
			Low & Linear & 0.00021 & 0.00011 & 0.00012 & 0.00010 & 0.00011 & 0.00013 \\
			Curvature & Cubic & 0.00078 & 0.00024 & 0.00025 & 0.00023 & 0.00023 & 0.00023 \\
			& PCHIP & 0.00028 & 0.00026 & 0.00026 & 0.00025 & 0.00025 & 0.00026 \\
			& \textbf{FIF} & \textbf{0.05463} & \textbf{0.05074} & \textbf{0.05117} & \textbf{0.05470} & \textbf{0.08347} & \textbf{0.09417} \\
			\hline
			High & Linear & 0.00014 & 0.00010 & 0.00009 & 0.00010 & 0.00015 & 0.00011 \\
			Curvature & Cubic & 0.00026 & 0.00027 & 0.00023 & 0.00029 & 0.00023 & 0.00030 \\
			& PCHIP & 0.00033 & 0.00140 & 0.00027 & 0.00031 & 0.00026 & 0.00058 \\
			& \textbf{FIF} & \textbf{0.05848} & \textbf{0.05235} & \textbf{0.05086} & \textbf{0.05411} & \textbf{0.05369} & \textbf{0.05389} \\
			\hline
			Noisy & Linear & 0.00012 & 0.00009 & 0.00009 & 0.00009 & 0.00010 & 0.00010 \\
			Sine & Cubic & 0.00025 & 0.00022 & 0.00022 & 0.00022 & 0.00022 & 0.00023 \\
			& PCHIP & 0.00028 & 0.00061 & 0.00026 & 0.00025 & 0.00025 & 0.00026 \\
			& \textbf{FIF} & \textbf{0.05410} & \textbf{0.05054} & \textbf{0.05100} & \textbf{0.05270} & \textbf{0.05324} & \textbf{0.05343} \\
			\hline
		\end{tabular}
	\end{table}
	
	We analyzed the effect of vertical scaling factors $\lambda_i$ on our curvature-preserving cubic FIF. Figure~\ref{fig:param_sensitivity} shows first derivatives for $\lambda \in [0.001, 0.1]$ across all datasets. Results demonstrate predictable sensitivity: smaller $\lambda$ values yield near-classical behavior, while larger values introduce controlled fractal oscillations. The method maintains stability across $\lambda \in [0.001, 0.1]$, with optimal results in $[0.01, 0.05]$ for most applications. This controlled sensitivity ensures practical usability without extensive parameter tuning.
	
	\section{Conclusion}
	We introduced a curvature-preserving fractal interpolation framework that merges cubic splines with fractal perturbations. Our method constructs interpolants that pass through data points while closely matching the underlying spline's curvature profile. A penalty-based optimization minimized curvature deviation, ensuring high geometric fidelity for both smooth and highly curved datasets, even under noise. Numerical experiments confirm our method's superior performance in preserving curvature and interpolation accuracy. Future work will extend this to rational or Hermite-type fractal interpolants as well as generalization to surface interpolation and 3D modeling, where curvature preservation becomes particularly important for applications in computer-aided design, biomedical modeling, and geometry-aware data fitting, etc.


\begin{thebibliography}{9}
		\bibitem{FT91} Fiorot, J. C., and Tabka, J.: Shape-preserving $\mathcal{C}^2$ cubic polynomial interpolating splines. Mathematics of Computation \textbf{57}195, 291–-298 (1991)
		\bibitem{HSS11} Hussain, M. Z., Sarfraz, M., and Shaikh, T. S.: Shape preserving rational cubic spline for positive and convex data. Egyptian Informatics Journal \textbf{12}(3), 231–-236 (2011)
		\bibitem{LM01} Lamberti, P., and Manni, C.: Shape-preserving $\mathcal{C}^2$ functional interpolation via parametric cubics. Numerical Algorithms \textbf{28}, 229–-254 (2001)
		\bibitem{SHS11} Sarfraz, M., Hussain, M. Z., Shaikh, T. S., and Iqbal, R.: Data visualization using shape preserving $\mathcal{C}^2$ rational spline. Proceedings of the 15th International Conference on Information
		Visualisation (IV-11), pp 528–-533, London, (2011)
		\bibitem{DWT05} Duan, Q., Wang, L., and Twizell, E. H.: A new $\mathcal{C}^2$ rational interpolation based on function values and constrained control of the interpolant curves. Applied Mathematics and Computation \textbf{161}(1), 311–-322 (2005)
		\bibitem{SHH12} Sarfraz, M., Hussain, M. Z., and Hussain, M.: Shape-preserving curve interpolation. International Journal of Computer Mathematics \textbf{89}(1), 35–-53 (2012)
		\bibitem{HG1991} Greiner, H.: A survey on univariate data interpolation and approximation by splines of given shape. Math. Comput. Model. \textbf{15}(10), 97–-108 (1991)
		\bibitem{Barnsley86} Barnsley, M. F.: Fractal Functions and Interpolation. Constructive Approximation \textbf{2}(4), 303–-329 (1986)
		\bibitem{Barnsley93} Barnsley, M. F.: Fractals Everywhere. 2nd Sub edn. Morgan Kaufmann Pub. (1993)
		\bibitem{BarHar89} Barnsley, M. F., Harrington, A. N.: The calculus of fractal interpolation functions. J. Approx. Theory. \textbf{57}(1), 14--34 (1989)
		\bibitem{CN2009} Chand, A. K. B., and Navascu\'{e}s, M. A.: Generalized Hermite Fractal Interpolation, Rev Real Academia de Ciencias, Zaragoza. \textbf{64}, 107–-120 (2009)
	\bibitem{CVN2014} Jha, S., Chand, A. K. B., and Navascu\'{e}s, M. A.: Approximation by shape preserving fractal functions with variable scalings. Calcolo \textbf{58}(1), (2021)
		\bibitem{CV2015} Chand, A. K., Viswanathan, P.: Rational Cubic Spline FIFs Preserving Shape. Applied Mathematics and Computation. \textbf{251}, 278–-292 (2015)
		\bibitem{Nav2002} Navascu\'{e}s, M. A.: Fractal Approximation. Revista de la Real Academia de Ciencias \textbf{96}(1), 73–-84 (2002)
		\bibitem{TCS2021} Tyada, K. R., Chand, A. K. B., Sajid, M.: Shape Preserving Rational Cubic Trigonometric Fractal Interpolation Functions. Mathematics and Computers in Simulation \textbf{190}, 866--891 (2021)
	\bibitem{VC2014} Navascu\'{e}s, M. A., Chand, A. K. B., Veedu, V. P., and Sebasti\'{a}n, M. V.: Fractal interpolation functions: a short survey. Applied Mathematics \textbf{5}(12), 1834--1841 (2014)
	\bibitem{VNC2016} Viswanathan, P., Chand, A. K. B., and Navascu\'{e}s, M. A.: Fractal perturbation preserving fundamental shapes: Bounds on the scale factors. Journal of Mathematical Analysis and Applications \textbf{419}(2), 804–-817 (2014)
		\bibitem{CK2015} Chand, A. K. B., Tyada, K. R.: Constrained 2D data interpolation using rational cubic fractal functions. In proceedings of International conference on Recent Trends in Mathematical Analysis and its Applications-ICRTMAA-14, pp 593--607. Springer, India (2015).
		\bibitem{chand2015positivity} Chand, A. K. B., and Tyada, K. R.: Positivity preserving rational cubic trigonometric fractal interpolation functions. In proceedings of the 1st International Conference on Mathematics and Computing: ICMC-15, pp 187--202. Sringer India (2015).
		\bibitem{CK2018} Chand, A. K. B., Tyada, K. R.: Constrained Shape Preserving Rational Cubic Fractal Interpolation Functions. Rocky Mt. J. Math. \textbf{48}(1), 75–-105 (2018)
		\bibitem{KC2018} Chand, A. K. B., Tyada, K. R.: Shape Preserving Constrained and Monotonic Rational Quintic Fractal Interpolation Functions. Int. J. Adv. Eng. Sci. Appl. Math. \textbf{10}(1), 15–-33 (2018)
		
	\end{thebibliography}
\end{document}